\definecolor {processblue}{cmyk}{0.96,0,0,0}
\newtheorem{Theorem}{Theorem}[section]
\newtheorem{Proposition}{Proposition}[section]
\newtheorem{Lemma}{Lemma}[section]
\newtheorem{Corollary}{Corollary}[section]
\newtheorem{Open problem}{Open problem}[section]
\newtheorem{Definition}{Definition}[section]
\newtheorem{Example}{Example}[section]
\newtheorem{Remark}{Remark}[section]
\newcounter{tmp}
\title{xxxx}
\date{}
 \title{Hausdorff dimension of  the arithmetic sum of self-similar sets}
\author{Kan Jiang}
\begin{document}
\maketitle
\begin{abstract}
Let $\beta>1$. We define a class of similitudes
\[S:=\left\{f_{i}(x)=\dfrac{x}{\beta^{n_i}}+a_i:n_i\in \mathbb{N}^{+}, a_i\in \mathbb{R}\right\}.\]
Taking any finite collection of similitudes $\{f_{i}(x)\}_{i=1}^{m} $ from $S$, it is well known that there is a  unique self-similar set $K_1$ satisfying $K_1=\cup_{i=1}^{m} f_{i}(K_1)$.  Similarly,   another self-similar set $K_2$ can be generated via the finite contractive maps of $S$. We call $K_1+K_2=\{x+y:x\in K_1, y\in K_2\}$ the arithmetic sum of two  self-similar sets. In this paper, we prove that $K_1+K_2$ is either a self-similar set or a unique attractor of some infinite iterated function system.  Using this result  we  can calculate the exact Hausdorff dimension of $K_1+K_2$ under some conditions, which partially  provides the dimensional result of $K_1+K_2$ if the IFS's of $K_1$ and $K_2$ fail the irrationality assumption, see  Peres and Shmerkin \cite{PS}.
\end{abstract}

\section{Introduction}
Let $\{g_{j}\}_{j=1}^{m}$ be an iterated function system (IFS) of similitudes which are defined on $\mathbb{R}$ by
\[g_j(x)=r_jx+a_j,\]
where the similarity ratios satisfy $0<r_j<1$ and the translation parameter $a_j\in \mathbb{R}$. It is well known that there exists a unique non-empty compact set $K\subset \mathbb{R}$ such that
\begin{equation}
\label{Hutchinson formula}
K=\bigcup_{j=1}^{m}g_j(K).
\end{equation}
We call $K$ the self-similar set or attractor for the IFS
$\{g_{j}\}_{j=1}^{m}$, see \cite{Hutchinson} for further details. The IFS $\{g_{j}\}_{j=1}^{m}$ is called homogeneous if all the similarity ratios are equal.
We say that $\{g_{j}\}_{j=1}^{m}$ satisfies the open set condition(OSC) \cite{Hutchinson}  if there exists a non-empty bounded open set $V\subseteq \mathbb{R}$ such that
\[g_i(V)\cap g_{j}(V)=\varnothing,\, i\neq j\]
and $g_j(V)\subseteq V$ for all  $1\leq j\leq m$.
Under the open set condition, the Hausdorff dimension of $K$ coincides with  the similarity dimension which is the unique solution $s$ of the equation $\sum_{j=1}^{m}r_j^s=1$.

 Let $F_1$ and $F_2$ be the self-similar sets with IFS's $\{r_ix+a_i\}^{n}_{i=1}$ and $\{r_j^{'}x+b^{'}_j\}^{m}_{j=1}$ respectively.
We call $F_1+F_2=\{x+y:x\in F_1, y\in F_2\}$ the arithmetic sum of self-similar sets.  The arithmetic sum of Cantor sets appears naturally in  dynamical systems. Palis \cite{Palis} posed the following problem which is currently known as the Palis' conjecture. Whether it is true (at least generically) that the arithmetic  sum of dynamically defined Cantor sets either has measure zero or contains an interval. This conjecture was solved in \cite{Yoccoz}. However, for the general self-similar sets this conjecture is still open.  In \cite{MO}, Mendes and Oliveira proved  that for the homogeneous Cantor sets, there are five possible structures for the sum. For the fractal structure, i.e. the similarity of the sum of self-similar sets, there are few results regarding this aspect. This is the first reason why we study the sum of self-similar sets.
 Another natural question  concerning  the sum of self-similar sets is to consider the Hausdorff dimension  or  Hausdorff measure of $F_1+F_2$. 
 Many papers have been devoted to this aspect. Let $C_a$ be the central Cantor set generated by removing a central interval of length $1-2a$ from $[0,1]$, and then continuing this process inductively on each remaining two intervals. Denote $\gamma(a)=\dim_{H}(C_a)=\dfrac{\log2}{-\log a}$. Peres and Solomyak  \cite{PeresSolomyak1} proved that
 \begin{Theorem}
 Given a fixed compact set $K\subset \mathbb{R}, $ the following two statements hold for almost every $a\in(0, \dfrac{1}{2})$:
 
 if $\gamma(a)+\dim_{H}(K)\leq 1$, then $\dim_{H}(K+C_a)=\gamma(a)+\dim_{H}(K)$; 
 
 if $\gamma(a)+\dim_{H}(K)> 1$, then the Lebesgue measure of $C_a+K$ is positive. 
 \end{Theorem}
 Motivated by this result Eroglu \cite{Eroglu} considered the Hausdorff measure of the arithmetic sum of two Cantor sets, and gave a necessary and sufficient condition such that the Hausdorff measure of  the sum of  Cantor sets is positive.
 Peres and Solomyak's main idea is using the potential theory. This is the main reason why their result is the almost-type result. An important progress of the dimensional problem is due to Peres and Shmerkin. 
 In \cite{PS}, Peres and Shmerkin showed that
 \begin{Theorem}
  If there exist $i,\,j$ satisfying $\dfrac{\log r_i}{\log r^{'}_j}\notin \mathbb{Q}$, then \[\dim_{H}(F_1+F_2)=\min\{1,\dim_{H}(F_1)+\dim_{H}(F_2)\}.\]
  \end{Theorem}
The hypothesis of this theorem is called the  irrationality assumption. 
It is easy to see that many  pairs of  iterated function systems satisfy this assumption. Peres and Shmerkin's formula gives a sufficient condition under which the expected dimension of the sum of self-similar sets can be obtained.  Their main idea is to project the product of two one-dimensional self-similar sets into the real line and to show that under the irrationality assumption the expected dimension of $F_1+F_2$ can be achieved. 
Later, Nazarov   et.al. \cite{Nazarov} investigated  similar problem for the convolutions of Cantor measures without resonance. 

Motivated by Peres and Shmerkin's result and Palis' conjecture,  we consider the IFS's of $F_1$ and $F_2$ failing  the irrationality assumption. With a little effort, it can be shown that  the IFS's $\{r_ix+a_i\}^{n}_{i=1}$ and $\{r_j^{'}x+b^{'}_j\}^{m}_{j=1}$ do not satisfy the irrationality assumption if and only if there exist $\beta>1$, $n_i$ and $m_j \in \mathbb{N}$ such that $r_i=\dfrac{1}{\beta^{n_i}}$,  $1\leq i \leq n $ and $r^{'}_j=\dfrac{1}{\beta^{m_j}}$,  $1\leq j \leq m $. Unless stated otherwise, in what follows we always assume that the similitudes of $K_1$ and $K_2$ are  from \[S:=\left\{f_{i}(x)=\dfrac{x}{\beta^{n_i}}+a_i:n_i\in \mathbb{N}^{+}, a_i\in \mathbb{R}\right\}.\]  We suppose  without loss of generality that   the IFS's of $K_1$ and $K_2$ are 
$\{f_{i}(x)=\frac{x}{\beta^{n_i}}+a_i\}_{i=1}^{n}$ and
  $\{g_{j}(x)=\frac{x}{\beta^{m_j}}+b_j\}_{j=1}^{m}$, respectively.  

We shall prove that $K_1+K_2$ is either a self-similar set or an attractor of some infinite iterated function system (IIFS) \cite{MRD, HF}. Therefore, calculating the Hausdorff dimension of $K_1+K_2$ is reduced to considering the dimension of the attractor of some IFS (IIFS). 
It is well known that generally it is difficult to calculate the Hausdorff dimension of  a self-similar set, especially when  overlaps occur.  It is  much more difficult to find the dimension of the attractor of some IIFS even if the IIFS satisfies certain separation condition. Here the attractor of the IIFS is in the sense of Definition \ref{IIIFS}, we will introduce this definition in the next section. In fact,  Peres and Shmerkin's dimensional formula   implies that 
we may not find the exact  Hausdorff dimension of $F_1+F_2$ generally. In this paper, we  shall consider some cases  which allow us to calculate the dimension of $K_1+K_2$ explicitly. 
An important difference between our main result and Peres and Shmerkin's formula is that we may not obtain the expected dimension for the sum of self-similar sets, see the first example in section 4.
 Peres and Shmerkin gave a uniform formula while we emphasize on the individual example. In other words, our method is analyzing single example rather than giving a uniform formula for the dimension of the sum of self-similar sets. When $K_1+K_2$ is a self-similar set with overlapping IFS, the techniques of the paper \cite{Hochman} could be useful.  However, this is beyond our discussion, and we do not give further details. 
 
For the topological structure of $K_1+K_2$, e.g. connected  property and so on,  generally we may not   easily  get further information. The  main reasons are that the IFS (IIFS) of $K_1+K_2$ may vary from each other and that discussing these two cases needs different techniques. 
 
The  structure of the paper is as follows. In section 2,  we introduce some basic results of   infinite iterated function systems and define some necessary terminology.  Next, we prove the similarity of $K_1+K_2$. 
In section 3, we concentrate on the Hausdorff dimension of $K_1+K_2$.  We consider both cases, i.e. $K_1+K_2$ is a self-similar set or a unique attractor of some IIFS, and give some dimensional results. 
 In section 4, we offer some examples for which we can explicitly calculate the  Hausdorff dimension of  $K_1+K_2$. Finally, we give some further remarks.

\section{Preliminaries and Main results}
\subsection{Infinite iterated function systems}
Before stating our main results, we introduce some  definitions and  results  of infinite iterated function systems (IIFS).
Infinite iterated function systems behave differently from IFS's \cite{MRD}, \cite{HF}. There are two definitions of the invariant set of  IIFS, see for example, \cite{HF},\, \cite{MRD} and \cite{HM}. We adopt Fernau's definition \cite{HF}.
\begin{Definition}\label{IIIFS}
Let $\mathcal{A}=\{\phi_{i}(x)=r_ix+a_i: i\in \mathbb{N},\,0<r_i<1, a_i\in \mathbb{R}\}$. If there exists $0<s<1$ such that for
every $\phi_{i}\in \mathcal{A}$, $|\phi_{i}(x)-\phi_{i}(y)|\leq s
|x-y| $, then $\mathcal{A}$ is called an infinite iterated function system, abbreviated as IIFS. A  unique non-empty compact  set $J$  is called  the attractor of $\mathcal{A}$ if
\[J=\overline{\bigcup_{i\in \mathbb{N}}\phi_{i}(J)},\]
where $ \overline{A}$ denotes the closure of $A$.
\end{Definition}
\setcounter{Remark}{1}
\begin{Remark}
 The existence and uniqueness of $J$ can be found in  \cite{HF}. In \cite{MRD}, Mauldin and Urbanski gave another definition of the attractor of IIFS, i.e. $J_{0}=\bigcup_{i\in \mathbb{N}}\phi_{i}(J_{0})$. However, for their definition the attractor $J_{0}$ may not be unique  or compact, see  example 1.3 from \cite{HF}. Evidently, $\overline{J_0}=J$.
\end{Remark}
An infinite iterated function system $\mathcal{A}=\{\phi_{i}: i\in \mathbb{N}\}$ satisfies the open set condition if there exists  a non-empty bounded open set $O\subseteq \mathbb{R}$ such that
\[\phi_i(O)\cap \phi_{j}(O)=\varnothing,\, i\neq j,\]
and $\phi_j(O)\subseteq O$ for all  $j \in \mathbb{N}$.
Under this separation condition, we can find the Hausdorff dimension of $J_0$. The following result can be found in \cite{MRD}, \cite{M} or \cite{HM}.
\setcounter{Theorem}{2}
\begin{Theorem}\label{DimensionIIFS}
For any IIFS satisfying the open set condition, we have
\[
\dim_{H}(J_0)=\inf\left\{t:\sum_{i\in \mathbb{N}}r_i^{t}\leq 1\right\}.
\]
\end{Theorem}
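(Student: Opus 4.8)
The plan is to prove the two inequalities $\dim_{H}(J_0)\le \theta$ and $\dim_{H}(J_0)\ge\theta$ separately, where $\theta:=\inf\{t:\sum_{i\in\mathbb{N}}r_i^{t}\le 1\}$. First I would record the elementary fact that the function $t\mapsto P(t):=\sum_{i\in\mathbb{N}}r_i^{t}$ is non-increasing (each $r_i^{t}$ decreases since $0<r_i<1$), so that the level set $\{t:P(t)\le 1\}$ is a half-line and in particular $P(t)\le 1$ for every $t>\theta$. I would also note that $J_0$ is bounded, since $\overline{J_0}=J$ is compact by Definition \ref{IIIFS}, and write $|J_0|$ for its diameter.

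For the upper bound I would use the natural cylinder covering. Iterating the relation $J_0=\bigcup_{i}\phi_i(J_0)$ gives, for every $k$, $J_0=\bigcup_{\omega\in\mathbb{N}^{k}}\phi_\omega(J_0)$, where for a word $\omega=i_1\cdots i_k$ the map $\phi_\omega=\phi_{i_1}\circ\cdots\circ\phi_{i_k}$ is a similarity with ratio $r_\omega=r_{i_1}\cdots r_{i_k}\le s^{k}$. Fix $t>\theta$. Then the (countable) family $\{\phi_\omega(J_0)\}_{\omega\in\mathbb{N}^{k}}$ covers $J_0$ by sets of diameter at most $s^{k}|J_0|\to 0$, and by multiplicativity $\sum_{\omega\in\mathbb{N}^{k}}|\phi_\omega(J_0)|^{t}=|J_0|^{t}\bigl(\sum_i r_i^{t}\bigr)^{k}\le |J_0|^{t}$. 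Letting $k\to\infty$ yields $\mathcal{H}^{t}(J_0)\le|J_0|^{t}<\infty$, so $\dim_{H}(J_0)\le t$; as $t>\theta$ is arbitrary, $\dim_{H}(J_0)\le\theta$.

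The lower bound is where the real work lies, and I would obtain it by exhausting the system with finite subsystems rather than by running a mass distribution argument on the whole IIFS. For each $N$ the finite IFS $\{\phi_1,\dots,\phi_N\}$ inherits the open set condition from $\mathcal{A}$ (the same open set works), so its attractor $J_0^{(N)}$ is a self-similar set whose Hausdorff dimension equals the similarity dimension $s_N$, the unique solution of $\sum_{i=1}^{N} r_i^{s_N}=1$, by the finite Moran formula recalled in the introduction. Since $J_0^{(N)}\subseteq J_0$ we get $\dim_{H}(J_0)\ge s_N$ for every $N$, so it suffices to show $s_N\to\theta$. The sequence $s_N$ is non-decreasing and bounded above by $\theta$ (from the upper bound), hence converges to some $s_\infty\le\theta$; if $s_\infty<\theta$, pick $t$ with $s_\infty<t<\theta$, so that $P(t)>1$ and therefore $\sum_{i=1}^{N} r_i^{t}>1$ for some $N$, forcing $s_N>t>s_\infty$, a contradiction. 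Thus $s_\infty=\theta$ and $\dim_{H}(J_0)\ge\theta$.

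The main obstacle, and the step that needs the most care, is the lower bound: unlike the finite case one cannot directly invoke the open set condition to control overlaps among infinitely many cylinders, so the crux is the finite-exhaustion argument together with the verification that the similarity dimensions $s_N$ of the truncated systems actually increase to $\theta$. A secondary point worth checking is the passage between $J_0$ and its closure $J$, but this causes no harm here because the covering argument uses $J_0$ directly and the finite attractors $J_0^{(N)}$ are genuine subsets of $J_0$.
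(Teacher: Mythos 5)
The paper does not prove this theorem at all: it is quoted as a known result with the citations \cite{MU}, \cite{M} and \cite{HM}, so there is no in-paper argument to compare against. Your proof is correct and is essentially the standard one from those references --- cylinder coverings of generation $k$ for the upper bound, and exhaustion by finite subsystems (which inherit the open set condition and whose similarity dimensions $s_N$ increase to $\theta$) for the lower bound; the only point to make explicit is that $J_0$ here is the Mauldin--Urba\'nski limit set $\bigcup_{\omega\in\mathbb{N}^{\mathbb{N}}}\bigcap_n\phi_{\omega_1}\circ\cdots\circ\phi_{\omega_n}(X)$, which is what guarantees $J_0^{(N)}\subseteq J_0$ and hence the monotonicity step you use.
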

On the other hand, generally the Hausdorff dimension of $J$ is more complicated.  One of the difficulties  is to analyze  $J\setminus J_0$, see \cite[Corollary 2]{HM}. For  the most cases, we shall prove that $J=K_1+K_2$ is an attractor of some IIFS in the sense of Definition \ref{IIIFS}. This makes the dimension of $K_1+K_2$ complicated.  We mentioned above that $\overline{J_0}=J$.  If $J_0$ and $J$ coincide except for  a countable set, then by the countable stability of  the Hausdorff dimension we have that $\dim_{H}(J_0)=\dim_{H}(J)$. We will give a sufficient  condition  under  which  we can identify  $J_0$  with $J$  apart from a countable set. This is the main idea  we will implement,  provided  $K_1+K_2$ is the unique attractor of some IIFS.
\subsection{Some definitions}
In this section, we introduce some definitions which  make our discussion  far more succinct.  Given any finite  reals $s_1,s_2,s_3,\cdots,  s_n$. Let $\sum=\{s_1,\,s_2,\,\cdots,\,s_n\}^{\mathbb{N}}$ be a symbolic space.
We say  $c_1c_2\cdots c_m \in\{s_1,\,s_2,\,\cdots,\,s_n\}^{m}$ is a block with length $m$, and we  use  capital letters with hats to denote the finite blocks of $\Sigma$. For instance, we denote $c_1c_2\cdots c_m$ by $\hat{P}$, i.e.  $\hat{P}=c_1c_2\cdots c_m$. 
\setcounter{tmp}{\value{Theorem}}
\setcounter{Definition}{3}
\begin{Definition}
 Let $\hat{P}_{1}=d_1d_2\cdots d_m$ and $ \hat{P}_{2}=c_1c_2\cdots c_m$ be two blocks of $\{s_1,\,s_2,\,\cdots,\,s_n\}^{m}$. We define the concatenation of $\hat{P}_{1}$ and $\hat{P}_{2}$ by   $\hat{P}_{1}*\hat{P}_{2}=d_1d_2\cdots d_mc_1c_2\cdots c_m$. The sum of $\hat{P}_{1}$ and $\hat{P}_{2}$ is defined by $\hat{P}_{1}+\hat{P}_{2}=(d_1+c_1)(d_2+c_2)\cdots (d_m+c_m)$. Concatenating  $k\in \mathbb{N}$ blocks of $\hat{P}_{1}$ is denoted by  \[\hat{P}_{1}^{k}=\underbrace{\hat{P}_{1}*\hat{P}_{1}*\cdots*\hat{P}_{1}}_\text{$k$ times}.\] The value of the block
$\hat{P}_{1}=d_1d_2\cdots d_m$ with respect to $\beta>1$ is \[(d_1d_2\cdots d_m)_{\beta}= \dfrac{d_1}{\beta}+\dfrac{d_2}{\beta^2}+\cdots+\dfrac{d_m}{\beta^m}.\]
Similarly, we can define the value of an infinite sequence $(d_n)\in \sum$ by  $(d_n)_{\beta}=\sum\limits_{n=1}^{\infty}\dfrac{d_n}{\beta^n}$.
\end{Definition}
\setcounter{Remark}{4}
\begin{Remark}\label{infinitesum}
In this definition, when we define the summation of two blocks, we assume that these two blocks have the same length. However,  in some cases we may need to consider the concatenation of infinite blocks. For instance, let $ \{\hat{P}_{i}\}^{\infty}_{i=1}$ and $\{\hat{Q}_{i}\}^{\infty}_{i=1}$ be two block sets, the concatenations of $\hat{P}_{1}\ast \hat{P}_{2}\ast\cdots$ and $\hat{Q}_{1}\ast \hat{Q}_{2}\ast\cdots$ are  two infinite sequences in $\Sigma$, we denote them  by $(a_n)$ and $(b_n)$ respectively. The summation of $\hat{P}_{1}\ast \hat{P}_{2}\ast\cdots$ and $\hat{Q}_{1}\ast \hat{Q}_{2}\ast\cdots$ is $(a_n+b_n)_{n=1}^{\infty}$. 
We shall emphasize this case in the proofs of some results. 
\end{Remark}
Now we give the definition of the codings of  the points in the self-similar sets. It is sightly  different from the usual way. 
Recall the  IFS's of $K_1$ and $K_2$ are
$\{f_{i}(x)=\frac{x}{\beta^{n_i}}+a_i\}_{i=1}^{n}$ and
  $\{g_{j}(x)=\frac{x}{\beta^{m_j}}+b_j\}_{j=1}^{m}$, where $n_i, a_i, m_j, b_j$ are determined by the IFS's of $K_1$ and $K_2$. It is well known that for any $x\in K_1$,  there exists $(i_k)_{k=1}^{\infty}$ such that \[x=\lim\limits_{k\to \infty}f_{i_1}\circ f_{i_2} \circ\cdots\circ f_{i_k}(0).\]
Usually, $(i_k)_{k=1}^{\infty}$ is called a coding of $x$. Nevertheless, we may make use of another representation.

 Note
that
\[f_{i}(x)=\frac{x}{\beta^{n_i}}+a_i=\frac{x+\beta^{n_i}a_i}{\beta^{n_i}}=\frac{x}{\beta^{n_i}}+\frac{0}{\beta}+\frac{0}{\beta^2}+\cdots+
\frac{0}{\beta^{n_i-1}}+\frac{\beta^{n_i}a_i}{\beta^{n_i}},\] therefore, we can  identify  $f_i(x)$ with a block $(\underbrace{000\cdots 0}_{n_i-1}a_i^{'}
)$,\, where $a_i^{'}=\beta^{n_i}a_i$. In fact, $f_i(x)$  and $(\underbrace{000\cdots 0}_{n_i-1}a_i^{'}
)$ can be  determined mutually.  Given  $(\underbrace{000\cdots 0}_{n_i-1}a_i^{'}
)$ with length $n_i$ and $a_i^{'}=\beta^{n_i}a_i$, we can find a similitude \[f_{i}(x)=\frac{x}{\beta^{n_i}}+\frac{0}{\beta}+\frac{0}{\beta^2}+\cdots+
\frac{0}{\beta^{n_i-1}}+\frac{\beta^{n_i}a_i}{\beta^{n_i}}=\frac{x+\beta^{n_i}a_i}{\beta^{n_i}}=\frac{x}{\beta^{n_i}}+a_i.\]   For simplicity  we denote this block by
$\hat{P}_{i}=(\underbrace{000\cdots 0}_{n_i-1}a_i^{'}
)$ if there is no fear of ambiguity.  We identify  $f_{i}$ with $f_{\hat{P}_{i}}$. The only difference between $f_{i}$ and $f_{\hat{P}_{i}}$ is the symbol as both of them represent the map $f_{i}(x)=f_{\hat{P}_{i}}(x)=\frac{x}{\beta^{n_i}}+a_i$. Similarly, we may define blocks in terms of the IFS of $K_2$. Let $D_1=\{\hat{P}_{1},\, \hat{P}_{2},\,\cdots,\,\hat{P}_{n}\}$ and $D_2=\{\hat{Q}_{1},\, \hat{Q}_{2},\,\cdots,\,\hat{Q}_{m}\}$, where $\hat{P}_{i}=(\underbrace{000\cdots 0}_{n_i-1}a_i^{'}
)$, $a_i^{'}=\beta^{n_i}a_i$,  $\hat{Q}_{j}=(\underbrace{000\cdots 0}_{m_j-1}b_j^{'}
)$ and $b_j^{'}=\beta^{m_j}b_j$. We say  $D_1$ and $D_2$ are the digit sets of $K_1$ and $K_2$ respectively.   The elements of $D_i$ are called the blocks. We emphasize that 
different blocks may stand for  the same similitude, for example let  $\hat{R_1}=(08)$ and  $\hat{R_2}=(22)$ be two blocks  with respect to base 3,  since their associated similitudes coincide, i.e.  $\varphi_{\hat{R}}(x)=\dfrac{x}{3^2}+\dfrac{0}{3}+\dfrac{8}{3^2}=\dfrac{x}{3^2}+\dfrac{2}{3}+\dfrac{2}{3^2}$, we can choose either of them if we want to find the digit sets of $K_i, 1\leq i\leq 2$. This replacement does not affect our main result. Usually, we pick the simpler blocks which  facilitate  our calculation. Once we choose the blocks, we fix them. With this new representation,  we have  following simple lemma.
\setcounter{Lemma}{5}
\begin{Lemma}\label{coding}
\[K_{1}=\{x=\lim\limits_{n\to \infty}f_{\hat{P}_{i_1}}\circ f_{\hat{P}_{i_2}} \circ\cdots\circ f_{\hat{P}_{i_n}}(0):\hat{P}_{i_j}\in D_1\}.\]
\[K_{2}=\{y=\lim\limits_{n\to \infty}g_{\hat{Q}_{i_1}}\circ g_{\hat{Q}_{i_2}} \circ\cdots\circ g_{\hat{Q}_{i_n}}(0):\hat{Q}_{i_j}\in D_2\}.\]
We call  the concatenation $\hat{P}_{i_1}\ast \hat{P}_{i_2} \ast \cdots$ ($\hat{Q}_{i_1}\ast \hat{Q}_{i_2} \ast \cdots$) a coding of x ($y$). 
\end{Lemma}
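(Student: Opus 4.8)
The plan is to reduce the statement to the classical coding theorem for contractive iterated function systems, since the block notation merely renames the maps. The first step is to observe that, by the identification established just above the lemma, each map $f_{\hat{P}_i}$ is literally the similitude $f_i(x)=x/\beta^{n_i}+a_i$: the block $\hat{P}_i$ and the map $f_i$ determine one another. Hence the right-hand side of the asserted equality for $K_1$ is nothing but the set of limits $\lim_{n\to\infty} f_{i_1}\circ\cdots\circ f_{i_n}(0)$ ranging over all infinite sequences $(i_k)\in\{1,\dots,n\}^{\mathbb N}$, and it suffices to prove this classical representation of the attractor. The same argument will then apply verbatim to $K_2$.

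Next I would record the uniform contraction estimate. Because each $n_i\ge 1$, every ratio satisfies $\beta^{-n_i}\le\beta^{-1}<1$; writing $s=\max_i\beta^{-n_i}<1$, the composition $f_{i_1}\circ\cdots\circ f_{i_n}$ is a similitude with ratio at most $s^n$. Since $K_1$ is compact, fix $M$ with $K_1\subseteq[-M,M]$. For a fixed coding, the convergence of the sequence $f_{i_1}\circ\cdots\circ f_{i_n}(0)$ follows from a Cauchy argument: consecutive tail differences are controlled by $s^n$ times a bounded displacement, so the limit exists, and by the same bound it is independent of the chosen base point.

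For the inclusion $\supseteq$, I would use that for any $y\in K_1$ one has $f_{i_1}\circ\cdots\circ f_{i_n}(y)\in K_1$ by the invariance $K_1=\bigcup_i f_i(K_1)$, together with the estimate
\[
\bigl|f_{i_1}\circ\cdots\circ f_{i_n}(0)-f_{i_1}\circ\cdots\circ f_{i_n}(y)\bigr|\le s^n|y|\le s^n M.
\]
Letting $n\to\infty$ and using that $K_1$ is closed forces the limit into $K_1$. For $\subseteq$, given $x\in K_1$ the invariance lets me peel off maps: choose $i_1$ with $x\in f_{i_1}(K_1)$, write $x=f_{i_1}(x_1)$ with $x_1\in K_1$, and iterate to obtain $x=f_{i_1}\circ\cdots\circ f_{i_n}(x_n)$ with $x_n\in K_1$. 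The same estimate then gives $|x-f_{i_1}\circ\cdots\circ f_{i_n}(0)|\le s^n M\to 0$, exhibiting $(i_k)$ as a coding of $x$.

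Since the statement is a standard fact dressed in new notation, there is no genuine obstacle. The only point requiring care is that the limit is taken at the base point $0$, which in general does not lie in $K_1$ because the translation parameters $a_i$ are arbitrary; one must therefore justify, via the compactness of $K_1$ and the uniform ratio bound $s<1$, that this limit nevertheless exists and lands in $K_1$. That is precisely what the contraction estimate above supplies.
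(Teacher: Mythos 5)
Your proof is correct and takes essentially the same route as the paper: the paper simply notes that, under the identification of $f_{\hat{P}_i}$ with $f_i$, the lemma is a restatement of the classical coding representation $x=\lim_{n\to\infty}f_{i_1}\circ\cdots\circ f_{i_n}(0)$ of points of the attractor. You merely supply the standard contraction and invariance estimates behind that well-known fact, which the paper leaves implicit.
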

\begin{proof}
 For any $x\in K_1$,  we know that there exists $(i_n)_{n=1}^{\infty}$ such that \[x=\lim\limits_{n\to \infty}f_{i_1}\circ f_{i_2} \circ\cdots\circ f_{i_n}(0).\] The lemma is a restatement of this fact.
\end{proof}
\setcounter{Remark}{6}
\begin{Remark}
Although the lemma above is very simple, the significance of this lemma is that we can translate over the problem, i.e. in order to study the sum of two numbers from $K_1$ and $K_2$ respectively, it is sufficient to  consider the sum of the blocks from $D_1$ and $D_2$.
\end{Remark}
Motivated by this lemma, we define a crucial
definition of this paper.
\setcounter{Definition}{7}
\begin{Definition}\label{Matching}
Take $s$ blocks
\[\hat{P}_{i_1},\,\hat{P}_{i_2},\,\hat{P}_{i_3},\,\cdots,\,
\hat{P}_{i_{s}}\] from $D_1$ with  lengths
$p_{1},\,p_{2},\,p_{3},\,\cdots,\, p_{s}$,\, $t$
blocks \[\hat{Q}_{j_1},\,\hat{Q}_{j_2},\,\hat{Q}_{j_3},\,\cdots,\,
\hat{Q}_{j_{t}}\] from $D_2$ with lengths
$q_{1},\,q_{2},\,q_{3},\,\cdots ,\,q_{t}$. If there exist
integers
 $k_{1},\,k_{2},\,k_{3},\cdots,
k_{s}$,\\ $l_{1},\,l_{2},\,l_{3},\cdots, l_{t}$ such that
\[
\sum_{i=1}^{s}k_{i}p_{i}=\sum_{j=1}^{t}l_{i}q_{i},
\]
then the block $(\hat{P}_{i_1}^{k_1}\ast \hat{P}_{i_2}^{k_2}\ast\cdots\ast
\hat{P}_{i_{s}}^{k_s})+(\hat{Q}_{j_1}^{l_1}\ast
\hat{Q}_{j_2}^{l_2}\ast\cdots\ast\hat{Q}_{j_{t}}^{l_t})$ is
called a Matching   with respect to $\beta$.
\end{Definition}
\setcounter{Remark}{8}
\begin{Remark}\label{sum}
Let $A$ and $B$ be two concatenations of some blocks from $D_1$ and $D_2$  respectively. If $A$ and $B$ have the same length, then the summation of $A$ and $B$ is a Matching, i.e. $A+B$ is a Matching. 
 We call the elements of $D_i$ blocks. However,  a Matching, in fact,  is also a block which is the sum of  concatenated blocks  from $D_1$ and $D_2$ respectively.  In what follows, we still call a Matching a block if there is no fear of ambiguity.  Clearly, in this definition the blocks $\hat{P}_{i_j}$ and $\hat{P}_{i_k}$ ($j\neq k$) could coincide. 
Given a  Matching we may find its associated similitude. For instance, let
 $(abc)$ be  a Matching with respect to $\beta$, then the corresponding
 similitude is
$\varphi(x)=\frac{x}{\beta^{3}}+\frac{a}{\beta}+\frac{b}{\beta^2}+
\frac{c}{\beta^{3}}$.
\end{Remark}

We  show that $D_1$ and $D_2$  generate  countably many Matchings.
\setcounter{Lemma}{9}
\begin{Lemma}\label{GenerateMatchings}
The cardinality of Matchings which are generated by $D_1$ and $D_2$ is at most countable.
\end{Lemma}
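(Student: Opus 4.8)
The plan is to bound the collection of Matchings from above by a countable set, by recording the finite amount of combinatorial data that determines each Matching. By the definition of a Matching, every Matching is specified by the following finite tuple: the two integers $\iota_1,\iota_2\in\mathbb{N}^{+}$; the indices $i_1,\dots,i_{\iota_1}\in\{1,\dots,n\}$ and $j_1,\dots,j_{\iota_2}\in\{1,\dots,m\}$ selecting the blocks of $D_1$ and $D_2$; and the multiplicities $A_1,\dots,A_{\iota_1},B_1,\dots,B_{\iota_2}\in\mathbb{Z}$ satisfying the length constraint $\sum_{i}A_iP_i=\sum_{i}B_iQ_i$. First I would fix $(\iota_1,\iota_2)$ and note that, because $D_1$ and $D_2$ are \emph{finite}, the admissible index choices form a finite set, while the multiplicity vectors range over a subset of $\mathbb{Z}^{\iota_1+\iota_2}$, which is countable. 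Hence for each fixed $(\iota_1,\iota_2)$ the set of admissible tuples is countable.

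Next I would take the union over all $(\iota_1,\iota_2)\in\mathbb{N}^{+}\times\mathbb{N}^{+}$. Since $\mathbb{N}^{+}\times\mathbb{N}^{+}$ is countable, the totality of admissible tuples is a countable union of countable sets, and therefore is itself countable. The construction in the definition assigns to each admissible tuple exactly one Matching, so this gives a well-defined surjection from the countable set of tuples onto the set of Matchings. Because the image of a countable set under any map is at most countable, the claim follows. It is harmless that distinct tuples may yield the same Matching, since surjectivity alone suffices for an upper bound on cardinality.

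An equivalent and perhaps cleaner route, which I would keep in reserve, is to observe that every Matching is a finite block drawn from a fixed finite alphabet. The finitely many blocks of $D_1$ involve only finitely many distinct digits, forming a finite set $E_1\subset\mathbb{R}$, and likewise those of $D_2$ draw from a finite set $E_2$. Any concatenation of blocks from $D_1$ still has all its entries in $E_1$, any concatenation from $D_2$ has entries in $E_2$, and, by Remark~\ref{sum}, a Matching is the digitwise sum of two such equal-length concatenations; its entries therefore lie in the finite set $E_1+E_2=\{e+e':e\in E_1,\ e'\in E_2\}$. Consequently every Matching belongs to $\bigcup_{L\ge1}(E_1+E_2)^{L}$, a countable union of finite sets, which is countable.

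I do not anticipate a genuine obstacle, as the statement is essentially a counting fact. The only point needing care is the bookkeeping ensuring that the data determining a Matching is finite --- equivalently, that the digit alphabet of every Matching stays finite --- and this is immediate from the finiteness of $D_1$ and $D_2$.
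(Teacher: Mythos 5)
Your argument is correct and is essentially the paper's own: the paper also enumerates Matchings length by length, observing that the finiteness of $D_1$ and $D_2$ forces only finitely many Matchings of each length, so the total is a countable union of finite sets. Your tuple-encoding and your ``finite words over the finite alphabet $E_1+E_2$'' variant are just two bookkeeping styles for the same counting fact, and both suffice for the stated cardinality bound (the paper's extra step of discarding Matchings that factor as concatenations of shorter ones only shrinks the set further).
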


\begin{proof}
The proof is constructive. Firstly,  we find out all the  possible Matchings which have length 1.  The cardinality of Matchings with length $1$ is finite due to the finite cardinalities of $D_1$ and $D_2$. If there are no such Matchings (see Example \ref{EX3}), we then consider the Matchings with length $2$. Similarly,  we can find finite Matchings which are of length $2$.  If there do not exist  such Matchings, then we
may consider the Matchings with length 3.
We continue this procedure and prove the lemma. However, the following adjustment is helpful to reduce some unnecessary Matchings,  i.e.  if the new born
Matchings can be concatenated by the old Matchings,
then we do not choose these new Matchings. In the remaining paper we always abide by this rule. In some cases, after
some steps, all the new Matchings can be concatenated by the former
old Matchings (see Example \ref{EX2}), then we stop the procedure. For this 
case, the cardinality of Matchings is finite. If the procedure can be continued for infinitely many times, then the  cardinality of Matchings is infinitely countable. Hence, the cardinality of Matchings is either finite or countably infinite. 
\end{proof}
\setcounter{Remark}{10}
\begin{Remark}
We shall prove that if the cardinality of Matchings  is finite, then $K_1+K_2$ is a self-similar set while $K_1+K_2$ is  the unique attractor of some IIFS if   the cardinality of Matchings is infinitely countable.
\end{Remark}
\setcounter{Example}{11}
\begin{Example}\label{EX1}
 Let $K_1=K_2$ be the attractor of the IFS  $\{g_{1}(x)=\frac{x}{3}
,\,g_{2}(x)=\frac{x+8}{9}\}$. All the possible Matchings are
\[\{(0),\,(22),\,(44),\, (242),\,(2442),\,(24442),\,(244442),\,(2444442),\,(24444442),\cdots\},\]
where $D_1=D_2=\{(0),\,(08)=(22)\}$. Here, for simplicity we assume that $\hat{R}=(08)=(22)$ as their corresponding similitudes are the same, i.e. $\varphi_{\hat{R}}(x)=\dfrac{x}{3^2}+\dfrac{0}{3}+\dfrac{8}{3^2}=\dfrac{x}{3^2}+\dfrac{2}{3}+\dfrac{2}{3^2}$.
\end{Example}
\begin{Example}\label{EX2}
Let $\{f_{1}(x)=\frac{x}{3}
,\,f_{2}(x)=\frac{x+2}{3}\}$ be the IFS of  $K_1$,  $K_2$
 is generated by $\{g_{1}(x)=\frac{x}{3} ,\,g_{2}(x)=\frac{x+8}{9}\}$.  Then  the Matchings generated by $D_1$ and $D_2$ are $\{(0),\,(2),\, (24),\, (42),\,(44)\}$, where $D_1=\{(0),\,(2)\}$ and $D_{2}=\{(0),\,(22)\}$.
\end{Example}
\begin{Example}\label{EX3}
Let $\{f_{1}(x)=\frac{x}{9}
,\,f_{2}(x)=\frac{x}{3^3}+\frac{2}{3}+\frac{2}{3^2}+\frac{2}{3^3}\}$ be the IFS of  $K_1=K_2$, where $D_1=D_{2}=\{(00),\,(222)\}$. For this example, there is no Matching with length $1.$ 
\end{Example}
After  we find  all the possible Matchings, we denote this set by
\[D=\{\hat{R}_{1} ,\,\hat{R}_{2},\,\cdots,\,\hat{R}_{n-1},\,
\hat{R}_{n},\cdots\}, \] the lengths of these Matchings are increasing. By
Remark \ref{sum}, $D$ uniquely determines a set of similitudes
$\Phi^{\infty}\triangleq\{\phi_1,\,\phi_2,\,\phi_3,\,\phi_4,\cdots\}
$.  We define
$E\triangleq
\bigcup\limits_{\{\phi_n\}\in\Phi^{\infty}}\bigcap\limits_{n=1}^{\infty}\phi_{1}\circ\phi_{2}\cdots\circ
\phi_{n}([0,1])$ and have
$E=\bigcup\limits_{i\in \mathbb{N}}\phi_{i}(E)$, see  section 2  from \cite{MRD}.

Now we state the first main result. 
\setcounter{Theorem}{14}
\begin{Theorem}\label{Structure}
$K_1+K_2$ is  either a self-similar set or an attractor of some infinite iterated function system. More precisely, if the cardinality of Matchings  is finite, then  $K_1+K_2$ is a self-similar set. When the cardinality is  infinitely countable, we have \[K_1+K_2=\overline{\bigcup_{\phi_{i}\in \Phi^{\infty}}\phi_{i}(K_1+K_2)}.\]
\end{Theorem}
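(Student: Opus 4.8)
The plan is to prove a single fixed-point identity,
\[K_1+K_2=\overline{\bigcup_{\phi_i\in\Phi^\infty}\phi_i(K_1+K_2)},\]
and then read off both clauses of the theorem according to whether the set $D$ of Matchings is finite or infinite. Throughout I identify, via Lemma \ref{coding}, a point of $K_1$ with an infinite concatenation of blocks from $D_1$ and a point of $K_2$ with a concatenation of blocks from $D_2$; for a Matching $\hat R$ of length $\ell$ I write $\phi_{\hat R}(x)=x/\beta^{\ell}+(\hat R)_\beta$ for its associated similitude, and note $1/\beta^{\ell}\le 1/\beta<1$, so $\Phi^\infty$ is uniformly contractive.

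First I would settle the inclusion $\supseteq$, i.e. $\phi_{\hat R}(K_1+K_2)\subseteq K_1+K_2$ for every Matching $\hat R$. By Remark \ref{sum} a Matching is the digitwise sum $A+B$ of a $D_1$-concatenation $A$ and a $D_2$-concatenation $B$ of the same length $\ell$, so $(\hat R)_\beta=(A)_\beta+(B)_\beta$. For $z=x+y\in K_1+K_2$ I then split $\phi_{\hat R}(x+y)=(x/\beta^\ell+(A)_\beta)+(y/\beta^\ell+(B)_\beta)$ and observe that the first bracket is $f_A(x)\in K_1$ and the second is $g_B(y)\in K_2$, where $f_A,g_B$ are the compositions coded by $A$ and $B$. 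Hence $\phi_{\hat R}(z)\in K_1+K_2$. Since $K_1+K_2$ is compact (a sum of two compacts), it is closed, so taking the union over $\Phi^\infty$ and closing preserves this inclusion.

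The hard part will be the reverse inclusion $\subseteq$, which forces the closure and an approximation-by-synchronization argument. Fix $z=x+y$ with codings whose partial block-lengths are $L^x_k$ and $L^y_l$; these need never coincide (e.g. all $D_1$-lengths even while some $D_2$-length is odd), so $z$ itself may admit no exact decomposition into Matchings. Given $\varepsilon>0$, I would choose $T$ large, cut $x$ at a block boundary with $s_P=L^x_p\ge T$ and $y$ at $s_Q=L^y_q\ge T$, and then append further blocks to each coding so that the two partial lengths meet at a common value $M$. Here is the crux: writing $g_P,g_Q$ for the gcd's of the available $D_1$- and $D_2$-block lengths, every partial length is $\equiv 0$ modulo its own gcd, so any large $M\equiv 0\pmod{\mathrm{lcm}(g_P,g_Q)}$ gives $M-s_P\equiv0\pmod{g_P}$ and $M-s_Q\equiv0\pmod{g_Q}$; by the numerical-semigroup (Chicken McNugget) theorem, for $M$ large these differences are realizable as nonnegative combinations of the respective block lengths, so the needed appended tails exist. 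The resulting point $z'=x'+y'$ synchronizes at length $M$, its length-$M$ prefix is a Matching $\hat R$ by Remark \ref{sum}, and exactly as above $z'=\phi_{\hat R}(x''+y'')\in\phi_{\hat R}(K_1+K_2)$ with $x''\in K_1,\ y''\in K_2$. Because $x',y'$ agree with $x,y$ on their first $\ge T$ digits, $|z-z'|\le C\beta^{-T}<\varepsilon$, so $z$ lies in the right-hand side. I expect this synchronization step—guaranteeing a common reachable length—to be the only genuinely delicate point.

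Finally I would deduce the dichotomy from the identity. By Lemma \ref{GenerateMatchings} the set $D$, hence $\Phi^\infty$, is at most countable. If $D$ is finite, then $\bigcup_i\phi_i(K_1+K_2)$ is a finite union of compact sets, already closed, so the identity becomes $K_1+K_2=\bigcup_i\phi_i(K_1+K_2)$ with finitely many contractions; by Hutchinson's theorem $K_1+K_2$ is the attractor of this IFS, i.e. a self-similar set. If $D$ is infinite, the identity is precisely the defining equation of the attractor of the IIFS $\Phi^\infty$ in the sense of Definition \ref{IIIFS}, and the uniqueness there identifies $K_1+K_2$ with that attractor.
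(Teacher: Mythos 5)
Your proof is correct, but it is organized differently from the paper's. The paper never proves the fixed-point identity directly on $K_1+K_2$: it introduces the auxiliary limit set $E=\bigcup_{\{\phi_n\}\in\Phi^{\infty}}\bigcap_n\phi_1\circ\cdots\circ\phi_n([0,1])$, which satisfies $E=\bigcup_i\phi_i(E)$ by construction, proves $\overline{E}=K_1+K_2$ (Lemma \ref{Closure}, resting on the approximation Lemma \ref{app}), and then transfers the self-referential equation from $E$ to its closure (Lemma \ref{IIFS}). You instead establish both inclusions of the identity directly: the forward inclusion by splitting $\phi_{\hat R}(x+y)=f_A(x)+g_B(y)$, and the reverse by synchronizing the two codings at a common cut length $M$. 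Your synchronization step is the analogue of the paper's Lemma \ref{app}, but where you invoke a gcd/Frobenius (numerical semigroup) argument to find a common reachable length, the paper uses the cruder and simpler trick of repeating the length-$k_1$ prefix $k_2$ times and the length-$k_2$ prefix $k_1$ times, so both sides have length $k_1k_2$; your version is more flexible (it lets you land at \emph{any} large admissible $M$) but the extra generality is not needed. Your treatment of the finite case, via closedness of a finite union of compacta plus uniqueness of the Hutchinson attractor, is actually more explicit than the paper's one-line justification. One small point to tighten: the Matching $\hat R$ you produce at length $M$ need not belong to the reduced list $D$ indexing $\Phi^{\infty}$ (Remark \ref{Noteasy} discards Matchings that factor through shorter ones), so you should add that $\hat R$ decomposes as a concatenation of elements of $D$, whence $\phi_{\hat R}=\phi_{j_1}\circ\cdots\circ\phi_{j_k}$ and, by your already-proved forward inclusion, $\phi_{\hat R}(K_1+K_2)\subseteq\phi_{j_1}(K_1+K_2)\subseteq\bigcup_{\phi_i\in\Phi^{\infty}}\phi_i(K_1+K_2)$. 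With that one-line patch the argument is complete.
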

\setcounter{Remark}{15}
\begin{Remark}
 A minor modification enables us to prove  the following stronger  result:
for any  $n\in \mathbb{N}^{+}$ and any $\{K_i\}^{n}_{i=1}$,
$K_1+K_2+\cdots+ K_n=\{\sum_{i=1}^{n}x_i:x_i\in K_i\}$  is either a self-similar set or a unique attractor of some IIFS, where $\{K_i\}^{n}_{i=1}$ are generated by the similitudes of $S$.
In \cite{MO}, Mendes and Oliveira proved  that for the homogeneous Cantor sets, there are five possible structures for the sum. However, in our setting we may  find  only two structures, i.e.  $K_1+K_2$ is either a self-similar set or an attractor of some IIFS. 
\end{Remark}
We have an interesting corollary of Theorem \ref{Structure}.
\setcounter{Corollary}{16}
\begin{Corollary}\label{PB}
 Let $F_1$ and $F_2$ be the self-similar sets with IFS's $\{r_ix+a_i\}^{n}_{i=1}$ and $\{r_j^{'}x+b^{'}_j\}^{m}_{j=1}$, if $0<r_i, r^{'}_j<1$ for any $ 1\leq i \leq n $ and  $ 1\leq j \leq m$, then
\[\dim_{P}(F_1+F_2)=\overline{\dim}_{B}(F_1+F_2).\]
\end{Corollary}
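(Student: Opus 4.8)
The plan is to prove the two inequalities separately. Since every bounded set satisfies $\dim_P(F_1+F_2)\le\overline{\dim}_B(F_1+F_2)$ (use the trivial one–element cover in the definition of the modified upper box dimension), the entire content is the reverse inequality $\dim_P(F_1+F_2)\ge\overline{\dim}_B(F_1+F_2)$. I note first that $E:=F_1+F_2$ is compact, being the image of the compact set $F_1\times F_2$ under the continuous map $(x,y)\mapsto x+y$. I would then split the argument according to whether or not the IFS's of $F_1$ and $F_2$ satisfy the irrational assumption of Peres and Shmerkin \cite{PS}.

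Suppose first that the irrational assumption holds. Here I would simply squeeze the dimensions together. On one hand $\dim_H(F_1+F_2)=\min\{1,\dim_H(F_1)+\dim_H(F_2)\}$ by \cite{PS}. On the other hand, since $F_1+F_2$ is a Lipschitz image of $F_1\times F_2$ and the upper box dimension of a product is at most the sum of the upper box dimensions of the factors, $\overline{\dim}_B(F_1+F_2)\le\min\{1,\overline{\dim}_B(F_1)+\overline{\dim}_B(F_2)\}$; and for a self-similar set the Hausdorff and upper box dimensions coincide, so $\overline{\dim}_B(F_i)=\dim_H(F_i)$. Combining these with the universal inequalities $\dim_H\le\dim_P\le\overline{\dim}_B$ gives $\dim_H(F_1+F_2)\le\dim_P(F_1+F_2)\le\overline{\dim}_B(F_1+F_2)\le\min\{1,\dim_H(F_1)+\dim_H(F_2)\}=\dim_H(F_1+F_2)$, so all four quantities agree and the corollary holds in this case.

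Now suppose the irrational assumption fails, so that after rescaling $F_1$ and $F_2$ fall into the class treated by Theorem \ref{Structure}; write $E=K_1+K_2$. By that theorem $E$ is either a self-similar set or the attractor $\overline{\bigcup_i\phi_i(E)}$ of an IIFS whose maps $\phi_i$ are genuine similarities with $\phi_i(E)\subseteq E$. I would reduce the reverse inequality to a homogeneity statement: it suffices to show that $\overline{\dim}_B(E\cap V)=\overline{\dim}_B(E)$ for every open $V$ meeting $E$. Indeed, if $E=\bigcup_i E_i$ is any countable cover, then (replacing $E_i$ by $\overline{E_i}\cap E$) the Baire category theorem produces some $E_{i_0}$ containing a relatively open piece $E\cap V$, whence $\overline{\dim}_B(E_{i_0})\ge\overline{\dim}_B(E)$; taking the infimum over covers yields $\dim_P(E)\ge\overline{\dim}_B(E)$, since the packing dimension coincides with the modified upper box dimension. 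To establish the homogeneity I would exploit the similar copies supplied by Theorem \ref{Structure}: in the self-similar alternative this is the classical fact that a self-similar set is quasi-self-similar, so every relatively open piece carries the full box dimension; in the IIFS alternative I would show that for each $z_0\in E$ and each small $\rho$ there is a composition $\phi_{i_1}\circ\cdots\circ\phi_{i_p}$ of ratio comparable to $\rho$ whose image lies in $E\cap B(z_0,\rho)$, so that $E\cap B(z_0,\rho)$ contains a scaled copy of $E$ and thus has box dimension at least $\overline{\dim}_B(E)$.

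The main obstacle is exactly this homogeneity claim for the IIFS attractor. The maps $\phi_i$ have ratios $\beta^{-|\hat{R}_i|}$ tending to $0$ along the increasing lengths of the Matchings, so there may be gaps between the available scales, and, more seriously, a point $z_0$ in the boundary set $E\setminus\bigcup_i\phi_i(E)$ (the countably many limit points flagged in the remark following Definition \ref{IIIFS}) need not lie inside any single cylinder $\phi_{i_1}\circ\cdots\circ\phi_{i_p}(E)$. I would handle the scale gaps by observing that consecutive admissible ratios differ only by a bounded factor, so that the copy one extracts near $z_0$ is a uniformly bi-Lipschitz, rather than exactly similar, image of $E$, which still preserves the upper box dimension. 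The limit points I would absorb using the countable stability of the packing dimension together with the identity $\overline{J_0}=J$, as already announced in the discussion following Theorem \ref{DimensionIIFS}: since $E$ and $\bigcup_i\phi_i(E)$ differ by a countable set, the homogeneity only needs to be checked on the dense part $\bigcup_i\phi_i(E)$, where every point does lie in arbitrarily small cylinders.
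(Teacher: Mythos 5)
Your treatment of the irrational case is exactly the paper's: squeeze $\dim_H\leq\dim_P\leq\overline{\dim}_B$ between $\min\{1,\dim_H F_1+\dim_H F_2\}$ from \cite{PS} on one side and the same quantity on the other side via the Lipschitz-image/product bound together with the coincidence of Hausdorff and upper box dimensions for self-similar sets. In the rational case you genuinely diverge: the paper disposes of the infinitely-many-Matchings alternative in one line by combining Lemma \ref{Closure} with Theorem 3.1 of \cite{MU} (and of the finitely-many alternative by the classical equality for self-similar sets), whereas you reprove the needed fact from scratch via the Baire-category criterion that a compact set, all of whose relatively open pieces carry the full upper box dimension, satisfies $\dim_P=\overline{\dim}_B$ (Corollary 3.9 of \cite{FK}). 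That route is sound and buys a self-contained argument in place of an external citation.

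Two remarks on your execution of the homogeneity step. First, you only need $\overline{\dim}_B(E\cap V)\geq\overline{\dim}_B(E)$ for open $V$ meeting $E$. Since $J_0=\bigcup_{\{\phi_n\}}\bigcap_{n}\phi_1\circ\cdots\circ\phi_n([0,1])$ is dense in $E=\overline{J_0}$ and every point of $J_0$ lies in arbitrarily small cylinders $\phi_{i_1}\circ\cdots\circ\phi_{i_p}([0,1])$, any such $V$ already contains a set $\phi_{i_1}\circ\cdots\circ\phi_{i_p}(E)\subseteq E$, which is an exact similar copy of $E$; no bi-Lipschitz approximation and no pointwise statement at every $z_0\in E$ is required, so the ``main obstacle'' you identify does not actually arise. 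Second, and more seriously, the patch you propose for the limit points --- that $E$ and $\bigcup_i\phi_i(E)$ differ by a countable set --- is not justified: the paper establishes such countability only under the extra hypothesis of Lemma \ref{Countablecriterion} (countability of the set $C$), and the discussion following Theorem \ref{DimensionIIFS} explicitly treats the possibility that $\overline{J_0}\setminus J_0$ is uncountable as the chief difficulty of the IIFS case. As written, that sentence of your proof asserts something the corollary's hypotheses do not give you. Fortunately, by the first remark you do not need it at all; delete that step and your argument closes correctly.
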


\subsection{Proofs of Theorem \ref{Structure} and Corollary \ref{PB}}
To begin with we assume that the cardinality of all Matchings is infinitely countable.
Before we prove the main results, we need some preliminaries.  In Lemma \ref{coding} we give the definition of the codings of $K_i$, $1\leq i\leq 2$.  Here we define the coding of $x+y\in K_1+K_2$ in a natural way, i.e.
 we denote the coding of $x+y$ by $(x_n+y_n)_{n=1}^{\infty}$, where $(x_n)$ and $(y_n)$ are the codings of $x$ and  $y$ respectively. 

We know that $(x_n)$ ($(y_n)$) can be decomposed into infinite blocks from $D_1$($D_2$), see the following figure

\begin{center}
\begin{tabular}{ |p{1cm}  p{1cm}  p{1cm}  p{1cm} p{1cm}  p{1cm} p{1cm}  p{1cm} p{1cm}  p{1cm} p{1cm}  p{1cm} p{1cm}  p{1cm} p{1cm}  p{1cm} p{1cm}  p{1cm}  p{1cm}  p{1cm} p{1cm}  p{1cm} p{1cm}  p{1cm} p{1cm}  p{1cm} p{1cm}  p{1cm}  |}
\hline
 &\multicolumn{1}{r|}{$X_1$}& & & \multicolumn{4}{ l|}{$X_2$}& & & \multicolumn{4}{ l|}{$X_3$}& & & \multicolumn{4}{ l|}{$X_4$}&  $\cdots$\\
\hline
 & \multicolumn{2}{l|}{$Y_1$}& & & \multicolumn{4}{ l|}{$Y_2$}& & & \multicolumn{4}{ l|}{$Y_3$}& & & \multicolumn{4}{ l|}{$Y_4$}  &$\cdots$\\
\hline
\end{tabular}
\end{center}

There are two floors in this figure.  By Remark \ref{infinitesum}, the concatenation of $X_1\ast X_2\ast \cdots$ ($Y_1\ast Y_2\ast \cdots$) is  $(x_n)$  ($(y_n)$), and we can define the summation of  the concatenated infinite blocks. 

 We call the top floor (bottom floor) the $x$-floor ($y$-floor).  In other words, in the $x$-floor the concatenation of each block $X_i$ is the coding of $x$.  We shall use this diagram representing the blocks in the proofs of some lemmas. Let $(a_n)_{n=1}^{\infty}$ be a coding of some point $x+y\in K_1+K_2$, i.e., $(a_n)=(x_n+y_n)$, where $(x_n)^{\infty}_{n=1}$ and $(y_n)^{\infty}_{n=1}$ are the codings of $x\in K_1$ and $y\in K_2$ respectively. Given $k>0$,  we say $(c_{i_1}c_{i_2}\cdots c_{i_k})$ is a segment of $(a_{i})_{i=1}^{\infty}$  with length $k$ if  there exists $j>0$ such that  $c_{i_1}c_{i_2}\cdots c_{i_k}=a_{j+1}\cdots a_{j+k}$. We define
 \begin{align*}
C=\Big\{(a_n)=(x_n+y_n): &\textrm{ there exists  } \,N\in \mathbb{N}^{+}\,\textrm{such that any segment of}\\
& (a_{N+i})_{i=1}^{\infty} \textrm{ is not a Matching}\Big\}.
\end{align*}
\setcounter{Lemma}{17}
\begin{Lemma}\label{app}
Let $(a_n)\in C$, for any $\epsilon>0$ we can find a coding $(b_n)_{n=1}^{\infty}$ which is the concatenation of infinite Matchings such that
\[|(a_n)_{\beta}-(b_n)_{\beta} |< \epsilon.\]
\end{Lemma}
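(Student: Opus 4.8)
The plan is to exploit the ``floor'' picture introduced above. Write the coding as $(a_n)=(x_n+y_n)$, with the $x$-floor decomposed into blocks $X_1\ast X_2\ast\cdots$ from $D_1$ and the $y$-floor into $Y_1\ast Y_2\ast\cdots$ from $D_2$. Call a position $p$ a \emph{common break} if it is simultaneously the end of a block on both floors; then, by the definition of a Matching, the prefix $a_1\cdots a_p$ is a concatenation of Matchings, one for each segment between consecutive common breaks. Conversely, if two common breaks $p_1<p_2$ both exceed the constant $N$ from the definition of $C$, the segment $a_{p_1+1}\cdots a_{p_2}$ is an aligned sum of full blocks, hence a Matching, contradicting $(a_n)\in C$. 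Therefore, for $(a_n)\in C$ there are only finitely many common breaks; let $p^\ast$ be the last one.

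First I would record that Matchings always exist: for any $\hat P\in D_1$ of length $P$ and $\hat Q\in D_2$ of length $Q$, the block $(Q\hat P)+(P\hat Q)$ has equal total lengths $PQ$ on both floors and is a Matching; call it the \emph{base Matching} $\hat R_0$, of length $L_0=PQ$. Using the self-similar structure I would factor $x=\beta^{-p^\ast}\hat x+c_1$ and $y=\beta^{-p^\ast}\hat y+c_2$, where $\hat x\in K_1,\ \hat y\in K_2$ are coded by the tails past $p^\ast$ and $c_1+c_2=(a_1\cdots a_{p^\ast})_\beta$ is the value of the Matching prefix. This reduces matters to approximating $\hat x+\hat y$, whose two floors have no common break at any positive position.

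For the main construction, fix $M$ large. Keep the blocks of $\hat x$ and of $\hat y$ covering the first $M$ positions, completing the two blocks that straddle position $M$; this leaves the $x$-floor ending at some $u\ge M$ and the $y$-floor at some $v\ge M$, with $u\in\Lambda_1$ and $v\in\Lambda_2$, where $\Lambda_i$ is the numerical semigroup generated by the block lengths of $D_i$. I would then append finitely many further blocks from $D_1$ and $D_2$ to bring both floors to a common position $W$, and from there repeat the base Matching $\hat R_0$ forever, producing points $x'\in K_1,\ y'\in K_2$. Since $x',y'$ differ from $\hat x,\hat y$ only past position $M$, we get $|\hat x-x'|\le\beta^{-M}\mathrm{diam}(K_1)$ and $|\hat y-y'|\le\beta^{-M}\mathrm{diam}(K_2)$, so $c_1+c_2+\beta^{-p^\ast}(x'+y')$ approximates $x+y$ with error at most $\beta^{-p^\ast-M}\bigl(\mathrm{diam}(K_1)+\mathrm{diam}(K_2)\bigr)<\epsilon$ for $M$ large. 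Its coding $(b_n)$ is the prefix Matchings, followed by the single Matching filling $[0,W]$ (there is no intermediate common break, since $\hat x,\hat y$ have none and the first inserted break is at $W$), followed by copies of $\hat R_0$ at every subsequent multiple of $L_0$; hence $(b_n)$ is a concatenation of Matchings, as required.

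The step I expect to be the main obstacle is the realignment: from the straddling positions $u,v$ one must always be able to append finitely many blocks from $D_1$ and from $D_2$ to reach a common position. Put $g_i=\gcd$ of the lengths of the blocks of $D_i$ and $g=\gcd(g_1,g_2)$. The gap $d=v-u$ is automatically a multiple of $g$, since $u$ is a multiple of $g_1$ and $v$ of $g_2$. One then needs appended lengths $\lambda_1\in\Lambda_1,\ \lambda_2\in\Lambda_2$ with $\lambda_1-\lambda_2=d$; this amounts to solving $g_1\mid\lambda_1$ together with $\lambda_1\equiv d\pmod{g_2}$, a system that is consistent precisely because $g=\gcd(g_1,g_2)$ divides $d$. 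Choosing a sufficiently large solution $\lambda_1$ and setting $\lambda_2=\lambda_1-d$, both lie in their numerical semigroups, since a numerical semigroup contains every sufficiently large multiple of the gcd of its generators. This elementary but slightly delicate arithmetic point is the heart of the argument; the remaining estimates are routine.
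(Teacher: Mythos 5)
Your proof is correct, and its overall shape---freeze the coding up to a position where the tail contributes less than $\epsilon$, then replace everything past that point by a concatenation of Matchings---is the same as the paper's. Where you differ is in the key step of turning the frozen, misaligned prefix into part of a Matching. The paper's device is blunter: if the blocks covering the first $n_0$ positions on the two floors have total lengths $k_1$ and $k_2$, it simply repeats the whole $x$-floor prefix $k_2$ times and the whole $y$-floor prefix $k_1$ times, producing a single Matching of length $k_1k_2$ whose first $n_0$ digits agree with $(a_n)$, and then appends an arbitrary infinite concatenation of Matchings. You instead complete the straddling blocks and re-synchronise the two floors by solving $\lambda_1-\lambda_2=v-u$ with $\lambda_i$ in the numerical semigroup $\Lambda_i$ of block lengths, which is consistent because $\gcd(g_1,g_2)$ divides $v-u$; you then append the base Matching $(Q\hat{P})+(P\hat{Q})$ periodically. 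Your route costs an extra arithmetic lemma but makes explicit two points the paper glosses over: that infinite concatenations of Matchings exist at all (your base Matching), and why the resulting $(b_n)$ genuinely decomposes into Matchings. Two cosmetic remarks: the parenthetical claim that $[0,W]$ contains no intermediate common break can fail once the realignment blocks are inserted, but this is harmless since intermediate breaks only split that segment into several Matchings; and the reduction to the last common break $p^{\ast}$ (with its finiteness argument from the definition of $C$) is likewise unnecessary, as your construction works verbatim from position $0$.
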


\begin{proof}
Let $(a_n)\in C$ and $\epsilon>0$, then there exists $n_0\in \mathbb{N}$ satisfying $\beta^{-n_0}< \epsilon$. Now, we choose $(b_n)_{n=1}^{\infty}$ such that its value in base $\beta$ is a point of $E$. Let $b_1b_2b_3\cdots b_{n_0}= a_1a_2a_3\cdots a_{n_0}$. If $a_1a_2a_3\cdots a_{n_0}$ is a Matching or a concatenation of some Matchings, then we can choose arbitrary tail $(b_{n_0+i})_{i=1}^{\infty}$ which is the concatenation of infinite  Matchings. Subsequently we have that
\[
|(a_n)_{\beta}-(b_n)_{\beta} |= |(a_{n_0+1}a_{n_0+2}a_{n_0+3}\cdots)_{\beta}-(b_{n_0+1}b_{n_0+2}b_{n_0+3}\cdots)_{\beta}|\leq M \sum_{i=n_0+1}^{\infty}\beta^{-i}<M(\beta-1)^{-1}\epsilon,
\]
where $M$ is a positive constant which depends  on $\beta$ and    the translations of the IFS's of $K_1$ and $K_2$.  
Hence we prove that there exists a point $ b\in E$, i.e. $b=(b_n)_{\beta}$, such that
\[|(a_n)_{\beta}-(b_n)_{\beta} |< \epsilon.\]
If $a_1a_2a_3\cdots a_{n_0}$ is not a concatenation of some Matchings,  by  virtue of the definition of $(a_n)$, 
$(a_n)=(x_n+y_n)$, where $(x_n), (y_n)$ are the codings of some  points in $K_1$ and $K_2$, respectively. 
However,  $(x_n)$ ($(y_n)$) can be decomposed the concatenation of $X_1\ast X_2\ast \cdots$ ($Y_1\ast Y_2\ast \cdots$).  We use the  following diagram to represent this. 

\begin{center}
\begin{tabular}{ |p{1cm}  p{1cm}  p{1cm}  p{1cm} p{1cm}  p{1cm} p{1cm}  p{1cm} p{1cm}  p{1cm} p{1cm}  p{1cm} p{1cm}  p{1cm} p{1cm}  p{1cm} p{1cm}  p{1cm}  p{1cm}  p{1cm} p{1cm}  p{1cm} p{1cm}  p{1cm} p{1cm}  p{1cm} p{1cm}  p{1cm}  |}
\hline
 &\multicolumn{1}{r|}{$X_1$}& & & \multicolumn{4}{ l|}{$X_2$}& & & \multicolumn{4}{ l|}{$X_3$}& & & \multicolumn{4}{ l|}{$X_4$}&  $\cdots$\\
\hline
 & \multicolumn{2}{l|}{$Y_1$}& & & \multicolumn{4}{ l|}{$Y_2$}& & & \multicolumn{4}{ l|}{$Y_3$}& & & \multicolumn{4}{ l|}{$Y_4$}  &$\cdots$\\
\hline
\end{tabular}
\end{center}
From this figure, we know  that the summation of  $X_1\ast X_2\ast \cdots$ and $Y_1\ast Y_2\ast \cdots$
is precisely the coding $(a_n)$.  Suppose that there exist $p, q$ such that $a_1a_2a_3\cdots a_{n_0}$
 is a prefix of $(X_1\ast X_2\ast \cdots \ast X_p)+(Y_1\ast Y_2\ast \cdots \ast Y_q)$, here we should emphasize that the lengths of $X_1\ast X_2\ast \cdots \ast X_p$ and $Y_1\ast Y_2\ast \cdots \ast Y_q$ may not coincide. However, we can still define the summation of their prefixes. Since $X_1\ast X_2\ast \cdots \ast X_p$ and $Y_1\ast Y_2\ast \cdots \ast Y_q$ do not have the same length,  we assume that $\sum_{i=1}^{p}|X_i|< \sum_{i=1}^{q}|Y_i|$, where $|X_i|$ denotes the length of the block $X_i$, then the first $n_0$ digits of  the ``summation" $(X_1\ast X_2\ast \cdots \ast X_p)+(Y_1\ast Y_2\ast \cdots \ast Y_q)$ is $a_1a_2a_3\cdots a_{n_0}$. Let $k_1=\sum_{i=1}^{p}|X_i| $ and $k_2=\sum_{i=1}^{q}|Y_i|$. Then $(X_1\ast X_2\ast \cdots \ast X_p)^{k_2}+(Y_1\ast Y_2\ast  \cdots \ast Y_q)^{k_1}$ is a Matching or a concatenation of some Matchings as $(X_1\ast X_2\ast \cdots \ast X_p)^{k_2}$ and $(Y_1\ast Y_2\ast  \cdots \ast Y_q)^{k_1}$ have the same length.  Moreover, the initial $n_0$ digits of  $(X_1\ast X_2\ast \cdots \ast X_p)^{k_2}+(Y_1\ast Y_2\ast  \cdots \ast Y_q)^{k_1}$ is $a_1a_2a_3\cdots a_{n_0}$. Now the remaining proof is the same as the first case.
\end{proof}
\setcounter{Remark}{18}
\begin{Remark}
The main idea of this lemma is that any $(a_n)\in C$ can be approximated by a sequence $(c_n)$ which is the concatenation of infinite Matchings. 
\end{Remark}
\setcounter{Lemma}{19}
\begin{Lemma}\label{Closure}
$\overline{E}=K_1+K_2$.
\end{Lemma}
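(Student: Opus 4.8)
The plan is to establish the two inclusions $\overline{E}\subseteq K_1+K_2$ and $K_1+K_2\subseteq\overline{E}$ separately, the first by a direct unpacking argument together with compactness, and the second by proving that $E$ is dense in $K_1+K_2$, for which Lemma~\ref{app} is the decisive input.

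For $\overline{E}\subseteq K_1+K_2$, I would first recall that a point of $E$ is precisely the value $(\hat{R}_{i_1}\ast\hat{R}_{i_2}\ast\cdots)_{\beta}$ of an infinite concatenation of Matchings. By definition each Matching $\hat{R}_{i_t}$ is the digitwise sum of an $x$-floor fragment (a concatenation of blocks of $D_1$) and a $y$-floor fragment (a concatenation of blocks of $D_2$) of equal length; see Remark~\ref{sum}. Hence, concatenating the Matchings $\hat{R}_{i_1}\ast\hat{R}_{i_2}\ast\cdots$ simultaneously concatenates their $x$-floor fragments into a full coding of some $x\in K_1$ and their $y$-floor fragments into a full coding of some $y\in K_2$, and the equal-length condition guarantees that the two floors stay aligned at every Matching boundary. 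By Lemma~\ref{coding} the value of this concatenation is exactly $x+y\in K_1+K_2$, so $E\subseteq K_1+K_2$. Since $K_1+K_2$ is the image of the compact set $K_1\times K_2$ under the continuous addition map it is compact, hence closed, and therefore $\overline{E}\subseteq K_1+K_2$.

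For the reverse inclusion it suffices to show that every $v=x+y\in K_1+K_2$ lies in $\overline{E}$. Fix codings $(x_n)$, $(y_n)$ of $x$ and $y$ and set $(a_n)=(x_n+y_n)$. If $(a_n)\in C$, then Lemma~\ref{app} produces, for every $\epsilon>0$, a coding $(b_n)$ that is an infinite concatenation of Matchings (so $(b_n)_{\beta}\in E$) with $|(a_n)_{\beta}-(b_n)_{\beta}|<\epsilon$, giving $v\in\overline{E}$. If $(a_n)\notin C$, then Matchings occur as segments arbitrarily far out in $(a_n)$, and the same truncate--realign--extend construction used in Lemma~\ref{app} applies in the same way: for given $\epsilon$ I would fix $n_0$ with $\beta^{-n_0}<\epsilon$, keep the first $n_0$ digits, convert the prefix into a genuine Matching, and append an arbitrary infinite Matching tail, again landing within $\epsilon$ of $v$. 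In either case $v\in\overline{E}$, so $K_1+K_2\subseteq\overline{E}$.

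The main obstacle is the realignment step hidden inside Lemma~\ref{app}, which is what makes the density argument work. The difficulty is that codings are not unique and that an abstract Matching segment of $(a_n)$ need not respect the floor boundaries, so one cannot simply read off infinitely many common boundary points and declare $(a_n)$ to be a concatenation of Matchings. The trick is to take the two floor fragments $X_1\ast\cdots\ast X_r$ and $Y_1\ast\cdots\ast Y_s$ that cover the first $n_0$ digits and, when their lengths $k_1,k_2$ differ, to repeat them $k_2$ and $k_1$ times respectively; the resulting pair has the common length $k_1k_2$, so its digitwise sum is a bona fide Matching, while the first $\min(k_1,k_2)\ge n_0$ digits are left untouched. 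Once this is in hand the arbitrary Matching tail can be appended, and the fact that every Matching is a concatenation of elements of the chosen generating set $D$ (Remark~\ref{Noteasy}, Lemma~\ref{GenerateMatchings}) ensures the approximating coding really comes from $\Phi^{\infty}$, hence from $E$. Finally I would note that compactness of $K_1+K_2$ is exactly what upgrades this density statement into the claimed equality $\overline{E}=K_1+K_2$.
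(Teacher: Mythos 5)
Your proof is correct and follows the paper's approach: the reverse inclusion $K_1+K_2\subseteq\overline{E}$ rests on Lemma~\ref{app} exactly as in the paper, while the forward inclusion $\overline{E}\subseteq K_1+K_2$ (which the paper leaves implicit) is supplied by your unpacking of Matchings into aligned $x$- and $y$-floor codings together with compactness of $K_1+K_2$. The only organizational difference is your case split $(a_n)\in C$ versus $(a_n)\notin C$, which is exhaustive, whereas the paper splits into ``some subsequence of prefixes is a concatenation of Matchings'' versus ``$(a_n)\in C$''; your observation that the truncate--realign--extend construction of Lemma~\ref{app} never actually uses membership in $C$ covers the codings falling between those two alternatives, so your version is, if anything, slightly more complete.
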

\begin{proof} For every $ \epsilon>0$ and $
x+y\in K_1+K_2$,  we can find a coding $(a_n)$ satisfying $x+y=\sum\limits_{n=1}^{\infty}a_n\beta^{-n}$. If there exists a subsequence of integer $n_{k}\to \infty$ such that $(a_1,a_2,a_3, \cdots, a_{n_k})$ is   a concatenation of some Matchings,  then by the definition of $E\triangleq
\bigcup\limits_{\{\phi_n\}\in\Phi^{\infty}}\bigcap\limits_{n=1}^{\infty}\phi_{1}\circ\phi_{2}\cdots
\phi_{n}([0,1])$ we have $x+y\in E$.  If  $(a_n)\in C$,  by Lemma \ref{app} there
exists $b\in E$ such that $|b-x-y|<\epsilon$.
\end{proof}
\begin{Lemma}\label{IIFS}
$\overline{\bigcup\limits_{i\in \mathbb{N}}\phi_{i}(K_1+K_2)}=K_1+K_2$.
\end{Lemma}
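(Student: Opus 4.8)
The plan is to establish the two set inclusions separately, leaning on Lemma \ref{Closure} (which gives $\overline{E}=K_1+K_2$) and on the Mauldin--Urbanski relation $E=\bigcup_{i\in\mathbb N}\phi_i(E)$ established just before Theorem \ref{Structure}. Throughout I write $G=K_1+K_2$ and record at the outset that $G$, being the sum of two compact sets, is compact and in particular closed; this is what lets me pass to closures safely.

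For the inclusion $G\subseteq\overline{\bigcup_{i}\phi_i(G)}$ I would argue as follows. Since $E\subseteq\overline{E}=G$ by Lemma \ref{Closure}, each $\phi_i(E)\subseteq\phi_i(G)$, so that
\[
E=\bigcup_{i\in\mathbb N}\phi_i(E)\subseteq\bigcup_{i\in\mathbb N}\phi_i(G).
\]
Taking closures and invoking $\overline{E}=G$ once more yields $G\subseteq\overline{\bigcup_{i}\phi_i(G)}$.

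The reverse inclusion is where the only genuine content lies, namely the claim $\phi_i(G)\subseteq G$ for every $i$. I would obtain it from the self-similar structure. Each $\phi_i$ is the similitude attached to a Matching $\hat R_i$, say of length $L$, so $\phi_i(x)=\beta^{-L}x+(\hat R_i)_\beta$; moreover $\hat R_i=U+V$, where $U$ is a concatenation of blocks of $D_1$ and $V$ a concatenation of blocks of $D_2$, both of length $L$. Given $z=x+y\in G$ with $x\in K_1$ and $y\in K_2$, I prepend $U$ to a coding of $x$ and $V$ to a coding of $y$; by Lemma \ref{coding} these are legitimate codings of points $x'\in K_1$ and $y'\in K_2$, and a direct value computation gives $x'+y'=(U)_\beta+(V)_\beta+\beta^{-L}(x+y)=(\hat R_i)_\beta+\beta^{-L}z=\phi_i(z)$, whence $\phi_i(z)\in K_1+K_2=G$. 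Alternatively, and more softly, $\phi_i(E)\subseteq E$ together with the continuity of $\phi_i$ gives $\phi_i(G)=\phi_i(\overline{E})\subseteq\overline{\phi_i(E)}\subseteq\overline{E}=G$, bypassing the coding bookkeeping altogether. Either way $\bigcup_{i}\phi_i(G)\subseteq G$, and since $G$ is closed, $\overline{\bigcup_{i}\phi_i(G)}\subseteq G$.

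Combining the two inclusions gives $\overline{\bigcup_{i}\phi_i(G)}=G$, as required. I expect the substantive step to be the verification $\phi_i(G)\subseteq G$: the soft topological route is short but opaque, whereas the coding route makes transparent \emph{why} a Matching-similitude carries $K_1+K_2$ into itself, and it is precisely here that one must check carefully that prepending the two equal-length factors $U$ and $V$ of $\hat R_i$ produces valid codings in $K_1$ and $K_2$. The remaining manipulations — interchanging unions with closures and using compactness of $G$ — are routine; the one point worth flagging is that the union is infinite, so $\bigcup_{i}\phi_i(G)$ need not be closed and the outer closure in the statement is genuinely needed.
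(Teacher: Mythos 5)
Your proof is correct and, in its ``soft'' topological form, essentially identical to the paper's: both directions come from combining $E=\bigcup_{i}\phi_i(E)$ with $\overline{E}=K_1+K_2$ (Lemma \ref{Closure}) and elementary manipulations of closures, the paper's first inclusion being exactly your observation that $\phi_i(\overline{E})\subseteq\overline{\phi_i(E)}\subseteq\overline{E}$. The additional coding argument for $\phi_i(K_1+K_2)\subseteq K_1+K_2$ is a valid and more explicit verification, but it is not needed and does not change the route.
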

\begin{proof}
 On the one hand, $E=\bigcup\limits_{i\in \mathbb{N}}\phi_{i}(E)$, this equality implies that \[
\overline{E}=\overline{\bigcup\limits_{i\in \mathbb{N}}\phi_{i}(E)}=\overline{\overline{\bigcup\limits_{i\in \mathbb{N}}\phi_{i}(E)}}\supseteq \overline{\bigcup\limits_{i\in \mathbb{N}}\overline{\phi_{i}(E)} }=\overline{\bigcup\limits_{i\in \mathbb{N}}\phi_{i}(K_1+K_2)}, \] i.e. we have
\[\overline{\bigcup\limits_{i\in \mathbb{N}}\phi_{i}(K_1+K_2)} \subseteq K_1+K_2.\]
 On the other hand,
$E=\bigcup\limits_{i\in \mathbb{N}}\phi_{i}(E)\subseteq \bigcup\limits_{i\in \mathbb{N}}\phi_{i}(K_1+K_2)$, therefore we prove the converse inclusion in terms of Lemma \ref{Closure}.
\end{proof}
\begin{proof}[Proof of Theorem \ref{Structure}:]
 Using  Lemma \ref{GenerateMatchings}, we know that there are at most countably many Matchings generated by $D_1$ and $D_2$. If the cardinality of Matchings is infinitely  countable, then by Lemma \ref{IIFS}, $K_1+K_2$ is an attractor of $\Phi^{\infty}$. If the cardinality is finite, then $K_1+K_2$ is a self-similar set. The proof is  similar with Lemmas \ref{Closure} and \ref{app}. The only difference is that it is not necessary to approximate  the coding of $x+y \in K_1+K_2$. In fact, we can  directly find  a coding which is the concatenation of  infinite Matchings such that the value of this infinite coding is $x+y$. In other words,  we have $E=K_1+K_2$. 
\end{proof}

Now, we can prove Corollary \ref{PB}.
When the IFS's of $F_1$ and $F_2$ satisfy the irrationality assumption, it is easy to prove Corollary \ref{PB} due to Peres and Shmerkin \cite{PS}. In fact, we can prove a stronger result. Let us recall their main result.
\setcounter{Theorem}{21}
\begin{Theorem}\label{irrationalityassumption}
Let $F_1$ and $F_2$ be the attractors of  $\{
r_{i}x+a_{i}\}^{n}_{i=1}$, $\{ r^{'}_{j}x+b_j\}^{m}_{j=1}$
respectively. If there exist $i,\,j$ such that
$\frac{\log{r_i}}{\log{r_{j}^{'}}}\notin \mathbb{Q}$, then
$\dim_{H}(F_1+F_2)=\min\{\dim_{H}F_1+\dim_{H}F_2,\,1\}$.
\end{Theorem}

\begin{proof}[Proof of Corollary \ref{PB}]
Firstly, we prove under the irrationality assumption that $$\dim_{H}(F_1+F_2)=\dim_{P}(F_1+F_2)=\dim_{B}(F_1+F_2)=\min\{\dim_{H}F_1+\dim_{H}F_2,\,1\}.$$ Using the theorem above,  if
$\dim_{H}(F_1+F_2)=1$, then \[1=\dim_{H}(F_1+F_2)\leq
\dim_{P}(F_1+F_2)\leq \overline{\dim}_{B}(F_1+F_2)\leq1.\] Suppose
$\dim_{H}(F_1+F_2)=\dim_{H}(F_1)+\dim_{H}(F_2)$. We note that for any $A,\,B\subseteq \mathbb{R}$, we have
$B-A=P_{\frac{\pi}{4}}(A\times B)$, where $P_{\frac{\pi}{4}}(A\times
B)$ denotes the projection of $A\times B$ on the $y$ axis along lines having $45^{\circ}$ angle with the $x$ axis.
Therefore,
\begin{eqnarray*}
\dim_{H}(F_1+F_2)&\leq& \overline{\dim}_{B}(F_1+F_2)\\&\leq
&\overline{\dim}_{B}((-F_2)\times F_1 )\\&\leq&
\overline{\dim}_{B}(F_1)+
\overline{\dim}_{B}(F_2)\\&=&\dim_{H}(F_1)+\dim_{H}(F_2)
\end{eqnarray*}
The second inequality holds as the projection is a Lipschitz map, the third inequality is due to the property of product of fractal sets, see the product formula 7.5, page 102, \cite{FG}.
 For the last equality,  we use the fact that for any self-similar set, its Hausdorff
dimension and the Box dimension coincide.

 If $K_1$ and $K_2$ are generated by the similitudes of $S$ and the cardinality of Matchings is infinitely countable, then we have $\dim_{P}(K_1+K_2)=\overline{\dim}_{B}(K_1+K_2)=\dim_{P}(E)=\overline{\dim}_{B}(E) $ due to Lemma \ref{Closure} and Theorem 3.1 from \cite{MRD}. By Theorem \ref{Structure}, we know that $K_1+K_2$ is a self-similar set if the cardinality of Matchings is finite. Hence, whether the irrationality assumption holds or not we always have $\dim_{P}(K_1+K_2)=\overline{\dim}_{B}(K_1+K_2)$.

 \end{proof}

\section{Dimension of $K_1+K_2$}

\subsection{IFS case}
Let $\sharp D$ be the cardinality of all Matchings generated by $D_1$ and $D_2$. 
In this section we  give a  necessary and sufficient  condition for the finiteness of  $\sharp D$. We know that  $K_1+K_2$ is a self-similar set if $\sharp D$  is finite. Hence, in this case we may make use of various techniques finding the Hausdorff dimension of $K_1+K_2$. 

We say that $D_i, 1\leq i\leq 2$, is homogeneous if the length of all the blocks is equal. For simplicity we may identify the blocks with the lengths of the blocks. There is one point we should keep in mind, namely different blocks of $D_i$ may have the same length. Hence we should count the multiplicity when some blocks have the same length, see the following example.

\begin{Example}
Let $\{f_{1}(x)=\frac{x}{3}
,\,f_{2}(x)=\frac{x+2}{3}\}$ be the IFS of  $K_1$,  $K_2$
 is generated by $\{g_{1}(x)=\frac{x}{3} ,\,g_{2}(x)=\frac{x+8}{9}\}$. The digit sets are $D_1=\{(0),\,(2)\}$ and $D_{2}=\{(0),\,(22)\}$. We can denote $D_1$ by $D_1^{'}=\{1,1\}$. For simplicity we still use $D_1$. Similarly, $D_2=\{1,2\}$. It is clear that $D_1$ is homogeneous and that  two $1$'s in the set refer to  different similitudes.
\end{Example}
It is easy to find that the digits in $D_i$  stand for the length of the blocks  and the similarity ratios, see the following example. 
\begin{Example}\label{dd}
Let $\{f_{1}(x)=\frac{x}{\beta^6}+a_1
,\,f_{2}(x)=\frac{x}{\beta^{10}}+a_2\}$ be the IFS of  $K_1$. We know that $D_1=\{6,10\}$. 6 represents the length of the  block $(0 0 0 0 0 \, (a_1\beta^6))$ and stands for the similarity ratios $\frac{1}{\beta^6}$. 

For this example, by the definition of $K_1$ we have $K_1=f_1(K_1)\cup f_2(K_1)$. Iterating this equation, then we have that $$K_1=f_1\circ f_1(K_1)\cup f_1 \circ f_2(K_1)\cup f_2 \circ f_1(K_1)\cup f_2 \circ f_2(K_1).$$
Hence we obtain  $4$ similitudes  $\{f_1\circ f_1, \,f_1\circ f_2,\, f_1\circ f_2, \,f_2\circ f_2\} $. Their associated digit set  which consists of some blocks can also be denoted by a simpler set $D^{''}=\{12, 16, 16, 20\}$. Similarly, we can  iterate the original IFS for any finite times. For the sake of convenience, we still  use the set of the lengths of the blocks as it  not only stands  for the new iterated blocks but also refers to the  similarity ratios under new  IFS.
\end{Example} 
\setcounter{Definition}{2}
\begin{Definition}
Let $D_1=\{k,k,\cdots, k\}$ be a homogeneous set with $l$ digits. We say $D_2$ is a multiplier set of $D_1$ if we iterate the IFS of $K_2$ for finite times,  all the  numbers of the new digit set $D^{'}$ are the multiplers of $k$,i.e., $D^{'}=\{l_1k, l_2k, \cdots, l_t k\}$, where $l_i\in \mathbb{N}^{+}$. Similarly, if $D_2$ is homogeneous, we can also define $D_1$ as the multiplier set of $D_2$ if $D_1$ satisfies similar property. 
\end{Definition}
\setcounter{Theorem}{3}
\begin{Theorem}\label{finitecc}
$\sharp D$ is finite if and only if $D_1$ ($D_2$) is homogeneous and $D_2$ is a  multiplier set of $D_1$ ($D_1$ is a  multiplier set of $D_2$). 
\end{Theorem}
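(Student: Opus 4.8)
The plan is to recast the whole statement in the combinatorial language of the two floors and then to count \emph{primitive} Matchings, by which I mean those Matchings that are not concatenations of shorter ones, as in the convention of Remark~\ref{Noteasy} (these are exactly the Matchings counted by $\sharp D$). A Matching is precisely a pair $(W_1,W_2)$, where $W_1$ is a concatenation of $D_1$-blocks, $W_2$ a concatenation of $D_2$-blocks, and $|W_1|=|W_2|=:L$. If I record the block boundaries (partial sums of the block lengths) of $W_1$ and $W_2$ as $0=s_0<\cdots<s_p=L$ and $0=t_0<\cdots<t_q=L$, then any splitting $(W_1,W_2)=M'\ast M''$ forces a common boundary $s_i=t_j$ strictly inside $(0,L)$, and conversely any such common interior boundary splits the Matching. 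Hence $(W_1,W_2)$ is primitive iff $\{s_i\}\cap\{t_j\}=\{0,L\}$, i.e.\ the two tilings of $[0,L]$ are transverse, and the theorem reduces to: transverse pairs are finite in number iff the stated homogeneity/multiplier pattern holds. I will also record the elementary reformulation that, for homogeneous $D_1=\{k,\dots,k\}$, $D_2$ is a multiplier set of $D_1$ iff all lengths of $D_2$ lie in one class modulo $k$; the ``only if'' comes from comparing the $t$-fold sums $t\,m_a$ and $(t-1)m_a+m_b$, the ``if'' from taking $t=k/\gcd(r,k)$.

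For the ``if'' direction, suppose $D_1=\{k,\dots,k\}$ and $m_j\equiv r\pmod k$ for all $j$; put $d=\gcd(r,k)$. Then $W_1$ is forced to be $k$ repeated, so its boundaries are exactly the multiples of $k$ up to $L$, and a boundary of $W_2$ is common iff the corresponding partial sum is a multiple of $k$. Since the $\ell$-th partial sum of $W_2$ is $\equiv \ell r\pmod k$, the first multiple of $k$ occurs after $k/d$ blocks; thus any $W_2$ with more than $k/d$ blocks has a common interior boundary and is reducible. Primitive Matchings therefore use at most $k/d$ blocks of $D_2$, so there are at most $m^{k/d}$ of them and $\sharp D<\infty$. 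The symmetric hypothesis is identical after exchanging roles.

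The substance is the converse, which I argue contrapositively: if neither homogeneity/multiplier pattern holds, I must produce primitive Matchings of unbounded length. The engine is one lemma: if $R\subseteq\mathbb{N}$ is finite and its elements are \emph{not} all congruent modulo an integer $k$, then there exist arbitrarily long sequences from $R$ whose proper initial partial sums avoid $0\bmod k$ while the total sum is $\equiv0\bmod k$. I would prove this by studying the walk on $\mathbb{Z}/k\mathbb{Z}$ with step set $R\bmod k$. On the subgraph induced on $\mathbb{Z}/k\mathbb{Z}\setminus\{0\}$ every vertex $v$ still has an out-edge, since all of its would-be successors $v+\rho$ land on $0$ only if all $\rho\in R$ equal $-v$, contradicting non-congruence; an everywhere-positive out-degree forces a directed cycle lying inside $\mathbb{Z}/k\mathbb{Z}\setminus\{0\}$. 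Because the full step-graph on the subgroup $H=\langle R\bmod k\rangle$ is strongly connected, one can run a simple path from $0$ into this cycle, loop the cycle arbitrarily often, and return to $0$ by a simple path, never meeting $0$ in between; the lengths of these excursions are unbounded.

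It remains to feed the right $k$ and $R$ into the lemma and to dispatch the failure cases. If $D_1$ is homogeneous of length $k$ but $D_2$ is not a multiplier set, then by the recorded reformulation the lengths $\{m_j\}$ are not all congruent mod $k$; applying the lemma with $R=\{m_j\}$ yields unboundedly long $W_2$ whose boundaries avoid the multiples of $k$, hence unboundedly long primitive Matchings (the top being forced homogeneous), and symmetrically when $D_2$ is homogeneous. If both $D_1$ and $D_2$ are non-homogeneous, I first claim one can choose $\gamma\in D_2$ and $P\neq P'\in D_1$ with $P\not\equiv P'\pmod\gamma$, or the mirror choice: were no such choice available, every $\gamma\in D_2$ would divide every difference of $D_1$-lengths, forcing $\max_j Q_j\le\Delta_1<\max_i P_i$ with $\Delta_1$ the gcd of those differences, and symmetrically $\max_i P_i\le\Delta_2<\max_j Q_j$, a contradiction; taking the bottom to be $\gamma$ repeated and applying the lemma with $k=\gamma$, $R=\{P,P'\}$ gives unboundedly long primitive Matchings. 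The main obstacle is exactly this cycle lemma: the crux is that mere non-congruence of the step lengths already guarantees a cycle of the walk avoiding $0$, and that strong connectivity of the step-graph on $H$ lets one enter and leave that cycle without touching $0$. The auxiliary arithmetic claim, that non-homogeneity of both sets always supplies a usable ``dodging'' pair, is the second delicate point; once both are secured, the two failure cases collapse onto the single lemma.
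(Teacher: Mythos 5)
Your proof is correct, and it splits from the paper exactly at the converse. The forward direction is the same modular-arithmetic idea as the paper's Lemma 3.6: homogeneity of $D_1$ forces the top boundaries to be the multiples of $k$, and the multiplier condition (equivalently, your single-congruence-class reformulation, which is right) caps the number of bottom blocks of a primitive Matching at $k/\gcd(r,k)$; just note that the top floor may also range over the various $k$-blocks of $D_1$, so the count is $n^{t}m^{k/d}$ rather than $m^{k/d}$ --- finiteness is unaffected. For the converse the paper argues quite differently: it exhibits a \emph{single} infinite coding whose two floors never share an interior boundary (a greedy two-choices construction when neither set is homogeneous, and an appeal to iterated blocks of non-multiple length in Lemma \ref{inverse1}), and then derives a contradiction by invoking Theorem \ref{Structure} and the claim that finiteness of $\sharp D$ forces every coding of a point of $K_1+K_2$ to be a concatenation of Matchings. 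You instead produce infinitely many primitive Matchings outright via the cycle lemma for the walk on $\mathbb{Z}/k\mathbb{Z}$: non-congruence of the step set keeps the out-degree positive away from $0$, giving arbitrarily long excursions from $0$ back to $0$ that avoid $0$ in between, i.e.\ arbitrarily long transverse pairs. This is more self-contained (no detour through the structure of $K_1+K_2$) and it supplies precisely the uniformity the paper's Lemma \ref{inverse1} only gestures at: producing, for each $n$, a bottom word of $n$ blocks whose \emph{total} length is not a multiple of $k$ is weaker than producing one all of whose proper partial sums avoid $0 \bmod k$, and your lemma delivers the latter. Your auxiliary claim that two non-homogeneous digital sets always contain a dodging pair $(\gamma;P,P')$ is correct and replaces the paper's greedy construction. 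One small repair: find the directed cycle by walking from a vertex that is actually reachable from $0$ (some $\rho\not\equiv 0$ exists by non-congruence), so the cycle lies in the subgroup $H$ and the entry and exit paths you invoke really exist.
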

We partition the proof of this theorem into several lemmas. 
\setcounter{Lemma}{4}
\begin{Lemma}
If $D_1$ is homogeneous and $D_2$ is a  multiplier set of $D_1$,  then $\sharp D$ is finite. 
\end{Lemma}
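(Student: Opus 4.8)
The plan is to translate both hypotheses into a statement about where the block boundaries of the two floors may fall, and then to show that an irreducible Matching cannot be long. Write $k$ for the common length of the blocks of the homogeneous set $D_1$; then every $x$-floor is a concatenation of length-$k$ blocks, so its block boundaries occur exactly at the multiples of $k$. Since $D_2$ is a multiplier set of $D_1$, there is a fixed integer $t$ such that the length of every concatenation of $t$ blocks from $D_2$ is a multiple of $k$ (these are precisely the lengths recorded in the iterated digital set $D'$). Recall from Remark \ref{Noteasy} that the Matchings counted in $D$ are the irreducible ones, i.e.\ those that cannot be written as a concatenation of strictly shorter Matchings; I would bound the length of such Matchings.

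First I would record the combinatorial meaning of irreducibility. Consider a Matching of length $L$, presented as an $x$-floor (a concatenation of $D_1$-blocks) and a $y$-floor (a concatenation of $D_2$-blocks) of equal total length $L$. I claim it is reducible precisely when there is an interior position $0<p<L$ that is simultaneously an $x$-floor boundary and a $y$-floor boundary: such a common boundary splits both floors into complete blocks, so the portions on $[0,p)$ and on $[p,L)$ are each a Matching, and the given Matching is their concatenation; conversely, the join of any two shorter Matchings produces such a common interior boundary. Hence a Matching lies in $D$ if and only if its two floors share no common interior boundary.

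Next I would exploit the two hypotheses together. Look at the $y$-floor boundary after its first $t$ blocks. By the multiplier property this boundary sits at a multiple of $k$, and by the homogeneity of $D_1$ every multiple of $k$ lying in $(0,L)$ is an $x$-floor boundary; so if the $y$-floor contained more than $t$ blocks, the boundary after block $t$ would be a common interior boundary, contradicting irreducibility. Therefore an irreducible Matching uses at most $t$ blocks on its $y$-floor, and consequently $L\le t\cdot\max_j Q_j$, where $Q_j$ ranges over the finitely many block lengths of $D_2$.

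Finally the counting is immediate. The length $L$ of an irreducible Matching is bounded by the fixed constant $t\cdot\max_j Q_j$ and is a multiple of $k$; thus the $x$-floor consists of at most $L/k$ blocks chosen from the finite set $D_1$, and the $y$-floor of at most $t$ blocks chosen from the finite set $D_2$. There are only finitely many such pairs of arrangements, hence only finitely many Matchings in $D$, which gives $\sharp D<\infty$. The only genuinely delicate points are the equivalence between reducibility and the existence of a common interior boundary, and the observation that the multiplier property forces a common boundary after every $t$ blocks of the $y$-floor; once these are in place, the bound on $L$ and the final count are routine.
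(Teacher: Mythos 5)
Your proof is correct and follows essentially the same route as the paper: both use the iteration depth $t$ to conclude that every $t$ consecutive $D_2$-blocks span a multiple of $k$, hence align with an $x$-floor boundary, so irreducible Matchings have bounded length. Your write-up is actually more careful than the paper's, which leaves the irreducibility criterion (reducible iff the two floors share a common interior boundary) and the resulting length bound implicit.
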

\begin{proof}
 Let $D_1=\{k,k,\cdots, k\}$  be a  homogeneous  set and $D_2$ be a  multiplier set of $D_1$. By the definition of multiplier set, after finite iterations of the IFS of $K_2$, say $t$ times,  $D_2^{'}= \{l_1k, l_2k, \cdots, l_mk\}$, where $l_i\in \mathbb{N}^{+}$. Now we prove that $\sharp D$ is finite. Let   $D_2=\{s_1,s_2,\cdots s_p\}$, where $s_p\in \mathbb{N}^{+}$.  If we take any $t$ digits  from $D_2$,  each time we can pick any numbers, which means we can pick $s_i$ for any $1\leq k\leq t$ times, then  by the definition of  multiplier set, $s_{i_1}+s_{i_2}+\cdots+s_{i_t}$ is a multiplier of $k$. Since $D_1=\{k,k,\cdots, k\}$  is   homogeneous and  the cardinality of $D_2^{'}= \{l_1k, l_2k, \cdots, l_mk\}$ is finite, it follows that $\sharp D$ is finite. 
\end{proof}
\begin{Lemma}
If $\sharp D$ is finite, then either $D_1$ or $D_2$ is homogeneous.
\end{Lemma}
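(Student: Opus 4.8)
The plan is to prove the contrapositive: if neither $D_1$ nor $D_2$ is homogeneous, then $\sharp D=\infty$. First I would record the precise mechanism of reducibility. Unwinding the definition of Matching together with the concatenation $\ast$, a Matching block $M$ is discarded in the sense of Remark \ref{Noteasy} (it is a concatenation of shorter Matchings) if and only if $M$ has a representation $M=A+B$, with $A$ a concatenation of blocks from $D_1$ and $B$ a concatenation of blocks from $D_2$ of the same total length, for which some interior position is simultaneously the end of a $D_1$-block in $A$ and the end of a $D_2$-block in $B$; call such a position a common boundary. Indeed, a common boundary at position $i$ lets one cut $A=A_1A_2$, $B=B_1B_2$ and write $M=(A_1+B_1)\ast(A_2+B_2)$, and conversely any factorisation into Matchings produces such a boundary. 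Hence an element of $D$ is exactly a Matching none of whose representations $A+B$ has an interior common boundary, and to force $\sharp D=\infty$ it suffices to produce Matchings of unbounded length with this property: the similitude attached to a block of length $L$ has contraction ratio $\beta^{-L}$, so blocks of distinct lengths are automatically distinct elements of $D$.

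Next comes the construction. Non-homogeneity supplies two $D_1$-blocks of lengths $a<b$ and two $D_2$-blocks of lengths $c<d$. I would model a simultaneous decomposition as a walk in the signed offset $s=X-Y$, where $X$ and $Y$ are the running total lengths consumed on the $x$-floor and $y$-floor: from $s=0$ one appends an $x$-block when $s<0$ (so $s$ increases by $a$ or $b$) and a $y$-block when $s>0$ (so $s$ decreases by $c$ or $d$), the state $s=0$ being a common boundary allowed only at the two ends. The offset stays in a bounded set, so the walk lives on a finite graph. This is where both hypotheses enter: from any state $s\neq 0$ at least one admissible move avoids $0$, since $a\neq b$ (respectively $c\neq d$) guarantees that the two available increments cannot both land on $0$. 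Therefore there is an infinite $0$-avoiding walk, hence a cycle inside the nonzero region; such a cycle must use both an $x$-move and a $y$-move (otherwise $s$ would be strictly monotone), so it crosses $0$ in sign and passes through states adjacent to $0$. Pumping the cycle $k$ times, preceded by a short entering segment out of $0$ and followed by a short closing segment back to $0$, yields for every $k$ a simultaneous decomposition of length $L_k\to\infty$ with no interior common boundary. This is precisely the mechanism behind the family $(2\,4^{k}\,2)$ of Example \ref{EX1}.

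It remains to upgrade ``the constructed decomposition has no interior common boundary'' to ``$M_k$ lies in $D$'', i.e. that \emph{every} representation $A+B$ of the block $M_k$ avoids an interior common boundary. This is the \textbf{main obstacle}, because reducibility needs only a single bad representation, whereas the walk controls only the one we built. I would resolve it through the canonical form of the blocks: each block of $D_i$ is $0\cdots0\,v$ with a single nonzero last digit, so in any representation the nonzero digits of $M_k$ can occur only at floor boundaries, and a common boundary at position $i$ forces $(M_k)_i$ to equal a sum $v+w$ of a $D_1$-value and a $D_2$-value. Choosing the two blocks used on each floor so that the digit values actually occurring in the construction cannot be written as such a sum then excludes an interior common boundary in all representations simultaneously; here I would exploit the freedom, noted in the text preceding Lemma \ref{coding} and illustrated in Example \ref{EX1} by $(08)=(22)$, to replace blocks by equivalent representatives without changing whether $\sharp D$ is finite, so that the relevant values are as small and separated as possible. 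Granting this digit-level check, each $M_k$ is an irreducible Matching, the lengths $L_k$ are pairwise distinct, and $\{M_k\}$ is an infinite subset of $D$, so $\sharp D=\infty$. The soft part (arbitrarily long decompositions with no interior synchronisation) is delivered by the cycle-and-pump argument the moment both digital sets fail to be homogeneous; all the delicate bookkeeping is concentrated in this final verification that no alternative parse reintroduces a common boundary.
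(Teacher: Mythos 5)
Your walk-and-pump construction is sound and is essentially the same combinatorial engine the paper uses (two distinct block lengths on each floor let you forever avoid synchronising the cumulative lengths), but the two arguments diverge at the crucial point, and yours has a gap exactly where you flag it. To conclude $\sharp D=\infty$ you must exhibit infinitely many Matchings that survive the reduction rule of Remark \ref{Noteasy}, i.e.\ Matchings that are not concatenations of shorter Matchings; as you correctly observe, reducibility of the digit string $M_k$ is witnessed by \emph{any} representation $A+B$ with an interior common boundary, while your walk controls only the one representation you built. Your proposed repair --- choose block representatives so that no digit value occurring in $M_k$ can be written as a sum of a $D_1$-value and a $D_2$-value --- is not available in general: the digit values are dictated by the translations $a_i,b_j$ of the given IFS's, and the paper's own Example \ref{EX1} ($D_1=D_2=\{(0),(22)\}$) already defeats it, since the interior digit $4=2+2$ of the Matchings $(24\cdots42)$ is precisely such a sum, yet these Matchings are irreducible because the candidate boundaries never align. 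So the criterion is neither achievable nor necessary, and the ``digit-level check'' you grant yourself is the whole remaining difficulty, not a routine verification.

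The paper sidesteps this entirely by arguing through codings rather than through $D$: if $\sharp D$ is finite then $K_1+K_2$ is a self-similar set for the finite Matching IFS, whence (as the paper asserts) every coding of a point of $K_1+K_2$ must have arbitrarily long prefixes that are concatenations of Matchings; the non-synchronising two-floor sequence you also construct then yields a coding with no such prefix, a contradiction, with no need to certify that any individual Matching is irreducible in all of its representations. If you want to keep your direct approach, you would need a genuine argument that infinitely many of your $M_k$ admit \emph{no} parse with an interior common boundary (for instance by analysing all length decompositions of $L_k$ into $D_1$- and $D_2$-lengths, not just the one you chose); otherwise you should reroute through the self-similarity of $K_1+K_2$ as the paper does.
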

\begin{proof}
We have proved that if $\sharp D$ is finite, then  $K_1+K_2$ is a self-similar set. This fact implies that for any coding of  $x+y\in K_1+K_2$, say $(a_n)=(x_n+y_n)$,  its associated    value in base $\beta$ is $x+y$, where $(x_n)$ and $(y_n)$ are the codings of $x$ and $y$ respectively. Moreover,  $(a_n)$ is the infinite concatenation of some Matchings.  In other words,   there exists a sequence $N_k\to \infty$ such that  $(a_1a_2\cdots a_{N_k})$  is a concatenation of  some Matchings.  

If neither $D_1$ nor $D_2$ is homogeneous, we may find a coding of some point in  $K_1+K_2$  which does not contain  any Matchings in its arbitrary long  prefix. This contradicts with the assumption  that $\sharp D$ is finite. 

Now we find a coding which satisfies the property we mentioned above. Without loss of generality, we assume that  $D_1=\{a_1,a_2,\cdots, a_p\}$  and  $D_2=\{b_1,b_2,\cdots, b_q\}$, where $a_1\neq a_2$ and $b_1\neq b_2$. 

We demonstrate how we can construct  the coding we need. Recall the definition of $x$-floor and $y$-floor, we know that summation of the concatenation of  the blocks of $x$-floor and $y$-floor is the coding of some point of $K_1+K_2$. Since $a_1\neq a_2$ and $b_1\neq b_2$, we may suppose $a_1\neq b_1$ and put them in the $x$-floor and $y$-floor respectively, see the following figure
\begin{center}
\begin{tabular}{ |p{1cm}  p{1cm}  p{1cm}  p{1cm} p{1cm}  p{1cm} p{1cm}  p{1cm} p{1cm}   p{1cm} p{1cm}  p{1cm} p{1cm}  p{1cm}  p{1cm}  p{1cm} p{1cm}  p{1cm} p{1cm}  p{1cm} p{1cm}  p{1cm} p{1cm}  p{1cm}  |}
\hline
 &\multicolumn{1}{r|}{$a_1$}& & & & & & $\cdots$\\
\hline
 & \multicolumn{2}{l|}{$b_1$}& & & & &  &$\cdots$\\
\hline
\end{tabular}
\end{center}
Here we identify the block with its length. 
Since $a_1\neq b_1$, it follows that no Matching appears. Next, for the $x$-floor, we pick  $a_2$ which satisfies that $a_1+a_2\neq b_1$. If $a_1+a_2=b_1$, then we pick $a_1$ again.  The Matching cannot appear as $a_1\neq a_2$ and $a_1+a_2=b_1$ imply that $a_1+a_1\neq b_1$. Now the $x$-floor and $y$-floor become the following: 
\begin{center}
\begin{tabular}{ |p{1cm}  p{1cm}  p{1cm}  p{1cm} p{1cm}  p{1cm} p{1cm}  p{1cm} p{1cm}   p{1cm} p{1cm}  p{1cm} p{1cm}  p{1cm}  p{1cm}  p{1cm} p{1cm}  p{1cm} p{1cm}  p{1cm} p{1cm}  p{1cm} p{1cm}  p{1cm}  |}
\hline
 &\multicolumn{1}{r|}{$a_1$}& &&&\multicolumn{1}{r|}{$a_2$}& & &  $\cdots$\\
\hline
 & \multicolumn{2}{l|}{$b_1$}& & & & &    &$\cdots$\\
\hline
\end{tabular}
\end{center}
For the $y$-floor, we repeat the same procedure.  Finally we have 
\begin{center}
\begin{tabular}{ |p{1cm}  p{1cm}  p{1cm}  p{1cm} p{1cm}  p{1cm} p{1cm}  p{1cm} p{1cm}  p{1cm} p{1cm}  p{1cm} p{1cm}  p{1cm} p{1cm}  p{1cm} p{1cm}  p{1cm}  p{1cm}  p{1cm} p{1cm}  p{1cm} p{1cm}  p{1cm} p{1cm}  p{1cm} p{1cm}  p{1cm}  |}
\hline
 &\multicolumn{1}{r|}{$a_1$}& & & \multicolumn{4}{ l|}{$a_2$}& & & \multicolumn{4}{ l|}{$a_{i_3}$}& & & \multicolumn{4}{ l|}{$a_{i_4}$}&  $\cdots$\\
\hline
 & \multicolumn{2}{l|}{$b_1$}& & & \multicolumn{4}{ l|}{$b_{i_2}$}& & & \multicolumn{4}{ l|}{$b_{i_3}$}& & & \multicolumn{4}{ l|}{$b_{i_4}$}  &$\cdots$\\
\hline
\end{tabular}
\end{center}
For each step, the Matching does not appear as  the length of the concatenations of blocks from $x$ and $y$-floor are not matched. 
The summation of the infinite  concatenated blocks from $x$ and $y$-floor is the coding we need. 
\end{proof}
Now we may set $D_1=\{k,k,\cdots,k\}$, if $D_2$ is not a multiplier set of $D_1$, we implement similar idea  constructing a coding such that   its arbitrary long prefix is not a concatenation of some Matchings.  

Hence,  in order to  prove Theorem \ref{finitecc}, it remains to prove following lemma. 
\begin{Lemma}\label{inverse1}
Let $D_1=\{k,k,\cdots,k\}$, if $D_2$ is not a multiplier set of $D_1$, then $\sharp D$ is  not finite. 
\end{Lemma}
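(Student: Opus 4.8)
The plan is to argue by contradiction, following the same strategy used when neither $D_1$ nor $D_2$ is homogeneous: assuming $\sharp D$ is finite, I will produce a single point of $K_1+K_2$ whose coding refuses to decompose into Matchings along its tail. The only new feature here is that homogeneity of $D_1=\{k,k,\ldots,k\}$ forces every $x$-floor boundary to sit at a multiple of $k$, so all the freedom lies in the $y$-floor. The first move is to recast the hypothesis numerically. I claim that if $D_2=\{Q_1,\ldots,Q_p\}$ (block lengths) is \emph{not} a multiplier set of $D_1$, then there are two blocks with $Q_a\not\equiv Q_b \pmod k$. Indeed, if every block of $D_2$ shared one residue $r$ modulo $k$, then after $t$ iterations of the IFS of $K_2$ each new block length, being a sum of $t$ lengths from $D_2$, would be $\equiv tr\pmod k$; taking $t=k$ makes them all divisible by $k$, so $D_2$ would be a multiplier set. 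The contrapositive yields blocks $\hat Q_a,\hat Q_b$ with incongruent lengths; note in particular that $k\ge 2$ and that at least one of $Q_a,Q_b$ is not divisible by $k$.

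Next I would construct the coding. Since the $x$-floor is built from blocks of length $k$, its cumulative lengths are exactly the multiples of $k$, so a common boundary of the two floors is precisely a position at which the cumulative $y$-length is divisible by $k$. I will choose the $y$-floor greedily from $\{\hat Q_a,\hat Q_b\}$ so that its cumulative length is never a multiple of $k$ after the first block. This is always possible: start with whichever of $Q_a,Q_b$ is nonzero modulo $k$; then at each later stage the current cumulative length $v$ satisfies $v\not\equiv 0\pmod k$, and one cannot have both $v+Q_a\equiv 0$ and $v+Q_b\equiv 0$ modulo $k$ (that would force $Q_a\equiv Q_b$), so at least one of the two blocks keeps the cumulative length off the multiples of $k$. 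Pairing this $y$-floor with an arbitrary $x$-floor of $D_1$-blocks, the sequence $(a_n)=(x_n+y_n)$ codes a genuine point $x+y\in K_1+K_2$ whose two floors share no boundary beyond position $0$. Hence no tail segment is cut out by whole blocks of both floors, so $(a_n)\in C$ and no prefix of $(a_n)$ is a concatenation of Matchings.

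I would then close the argument exactly as in the preceding lemma. If $\sharp D$ were finite, all Matchings would have bounded length, and self-similarity of $K_1+K_2$ (Theorem \ref{Structure}) would force arbitrarily long prefixes of every coding to be concatenations of Matchings; the last Matching of such a prefix would then be a segment of the tail of $(a_n)$, which is impossible for $(a_n)\in C$. This contradiction shows $\sharp D$ cannot be finite, completing the proof of Theorem \ref{finitecc}. I prefer this forward construction (an infinite zero-avoiding $y$-floor) to the tempting alternative of exhibiting infinitely many irreducible Matchings directly, because the latter would require round-trip $y$-sequences that return to a multiple of $k$ only at the very end, and the reachability of such a return is genuinely delicate, whereas the greedy escape above needs nothing of the sort.

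The step I expect to be the main obstacle is justifying the passage from ``the two floors share no common boundary'' to ``no segment of the tail is a Matching.'' Taken literally, a segment is a Matching if its \emph{digit string} coincides with that of some Matching, and one must rule out such a coincidence occurring at a position unrelated to the chosen floor decomposition. Resolving this is where I would lean on the paper's operative convention that the relevant notion is tied to the block structure of the $x$- and $y$-floors, reinforced by the approximation mechanism of Lemma \ref{app}; making this alignment explicit, rather than treating Matchings as free-floating digit strings, is the part of the proof that requires the most care.
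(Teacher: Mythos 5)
Your proposal is correct and follows the same strategy as the paper's own proof: construct a coding of a point of $K_1+K_2$ whose $y$-floor boundaries never fall on a multiple of $k$ (the only possible $x$-floor boundaries), so that no prefix is a concatenation of Matchings, contradicting the consequence of $\sharp D$ being finite. Your execution of the key combinatorial step is in fact tighter than the paper's: where the paper only exhibits, for each finite iteration, a single iterated block of $D_2$ whose length is not a multiple of $k$ and then asserts the desired infinite coding exists, your residue argument (two blocks of $D_2$ with incongruent lengths $\bmod\ k$, followed by the greedy choice keeping the cumulative $y$-length away from $0 \bmod k$ forever) actually produces the required infinite $y$-floor; both arguments otherwise rest on the same conventions as the paper, namely that Matchings are read off the floor decomposition and that finiteness of $\sharp D$ forces every coding to decompose into Matchings.
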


\begin{proof}
If $\sharp D$ is  finite, then any coding of  $x+y\in K_1+K_2$, say $(a_n)=(x_n+y_n)$,  is the infinite concatenation of some Matchings.  Namely  there exists a sequence $N_k\to \infty$ such that  $(a_1a_2\cdots a_{N_k})$  is a concatenation of  some Matchings.  If we can find a coding $(a_n)$ such that for any $n$ $(a_1a_2\cdots a_{n})$ is not a concatenation of  some Matchings, then we prove this lemma. 
Since $D_2$ is not a multiplier set of $D_1$, it follows that for any finite iterations of the IFS of $K_2$, there always  exists one   block  which is the concatenation of some blocks from $D_2$ such that 
 its length is not a multiplier  of $k$. We let this  block be $(b_1b_2\cdots b_t)$, see the following figure:
\begin{center}
\begin{tabular}{ |p{1cm}  p{1cm}  p{1cm}  p{1cm} p{1cm}  p{1cm} p{1cm}  p{1cm} p{1cm}  p{1cm} p{1cm}  p{1cm} p{1cm}  p{1cm} p{1cm}  p{1cm} p{1cm}  p{1cm}  p{1cm}  p{1cm} p{1cm}  p{1cm} p{1cm}  p{1cm} p{1cm}  p{1cm} p{1cm}  p{1cm}  |}
\hline
 & \multicolumn{2}{l|}{$k$}& & & \multicolumn{4}{ l|}{$k$}& & & \multicolumn{4}{ l|}{$k$}& & & \multicolumn{4}{ l|}{$k$}  &$\cdots$\\
\hline
 &\multicolumn{1}{r|}{$Y_1$}& & & \multicolumn{4}{ l|}{$Y_2$}& & & \multicolumn{4}{ l|}{$\cdots$}& & & \multicolumn{4}{ l|}{$Y_{N}$}&  $\cdots$\\
\hline
\end{tabular}
\end{center}
We may assume that $(b_1b_2\cdots b_t)= Y_1\ast Y_2\ast \cdots \ast Y_N$ for some $N$, where each $Y_i$ is some block from $D_2$. By the assumption we know that its length is not a multiplier of $k$. 
 Hence we can  find such coding $(a_n)$ (sum of the $x$ and $y$-floor) satisfying that  for any $n$,  $(a_1a_2\cdots a_{n})$ is not a concatenation of  some Matchings. 
\end{proof}
\setcounter{Remark}{7}
\begin{Remark}
When $K_1+K_2$ is a self-similar set, we do not know whether $\sharp D$ is finite or not. 
\end{Remark}
If  $\sharp D$ is finite, then $K_1+K_2$ is a self-similar set.  In this case, we can explicitly find all the similitudes of the IFS. Therefore we can implement many ideas calculating $\dim_{H}(K_1+K_2)$. We do not discuss this problem in detail. 
\subsection{IIFS case}
Comparing with IFS case, it is much more complicated  when $K_1+K_2$ is  a unique attractor of some IIFS. 
We have mentioned the main reasons in the second  section.  

By Lemma \ref{Closure}, we know that when $\sharp D$ is infinitely countable, $\overline{E}=K_1+K_2$. If $\overline{E}\setminus E$  is uncountable, we may not  calculate the dimension of  $K_1+K_2$ in terms of the dimensional theory of IIFS. Hence, we need to find some class that can guarantee $\dim_{H}(E)=\dim_{H}(K_1+K_2)$. In fact,  even for calculating $\dim_{H}(E)$,  it is not easy to find $\dim_{H}(E)$ when the IIFS has some overlaps \cite{NT,HM}. 

Let $(a_n)_{n=1}^{\infty}$ be the coding of some point $x+y\in K_1+K_2$, i.e., $(a_n)=(x_n+y_n)$, where $(x_n)$ and $(y_n)$  are the codings of $x$ and $y$ respectively.
Recall the definition of  $C$,
\[C=\{(a_n):\mbox{there exists }\,N\in \mathbb{N}^{+}\,\mbox{such that any  segment of }(a_{N+i})_{i=1}^{\infty}\mbox{ is  not   a Matching}\}.\]
We have
\setcounter{Lemma}{8}
\begin{Lemma}\label{almostequal}
If $C$ is countable,  then we have that $E=K_1+K_2$ apart from a countable set.
\end{Lemma}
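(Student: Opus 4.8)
The plan is to realise $(K_1+K_2)\setminus E$ as a subset of the set of values of the codings that lie in $C$, and then to invoke the hypothesis that $C$ is countable. Write $\pi$ for the evaluation map that sends a coding $(a_n)$ to the real number $(a_n)_\beta=\sum_{n\geq 1}a_n\beta^{-n}$. Since $E\subseteq\overline{E}=K_1+K_2$ by Lemma \ref{Closure}, we have $E\subseteq K_1+K_2$, so the entire content of the lemma is the countability of the difference $(K_1+K_2)\setminus E$.

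First I would reprise the case division used in the proof of Lemma \ref{Closure}. Every $z=x+y\in K_1+K_2$ carries a coding $(a_n)=(x_n+y_n)$, and for this coding one of two situations is in force: either there is a sequence $n_k\to\infty$ for which each prefix $(a_1a_2\cdots a_{n_k})$ is a concatenation of Matchings, in which case $(a_n)$ is cofinally, hence entirely, an infinite concatenation of Matchings and therefore $z=\pi((a_n))\in E$ by the definition of $E$; or else the coding eventually contains no Matching among its segments, i.e.\ $(a_n)\in C$. Consequently, if $z\notin E$ its coding cannot be of the first type, and by the dichotomy $(a_n)\in C$, so $z\in\pi(C)$. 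This yields
\[
(K_1+K_2)\setminus E\subseteq \pi(C).
\]
As $\pi$ is single-valued and $C$ is countable by hypothesis, $\pi(C)$ is countable; hence $(K_1+K_2)\setminus E$ is countable, which is exactly the assertion that $E$ and $K_1+K_2$ coincide apart from a countable set.

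I expect the delicate point to be the exhaustiveness of the dichotomy above, namely that a coding which is \emph{not} in $C$ must have infinitely many prefixes that are concatenations of Matchings (so that its value falls in $E$). The subtlety is that ``a segment is a Matching'' is a property of the abstract digit block $a_{p+1}\cdots a_q$ and, a priori, permits a splitting into $D_1$- and $D_2$-concatenations different from the one induced by the fixed codings $(x_n)$ and $(y_n)$, since digits may straddle the block boundaries of the two floors. The natural route is to track the common breakpoints of the $x$-floor and the $y$-floor, i.e.\ the positions at which both floors simultaneously finish a block: when there are infinitely many such positions the segments between consecutive ones are genuine Matchings and $z\in E$, whereas when there are only finitely many one must verify that beyond the last of them no segment of the tail can be a Matching, forcing $(a_n)\in C$. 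Ruling out the alternative straddling decompositions after the last common breakpoint, or else showing that the points they produce are themselves only countably many and may be absorbed into the exceptional set, is the main obstacle.
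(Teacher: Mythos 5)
Your argument is essentially the paper's own proof: the same dichotomy (either infinitely many prefixes of the coding are concatenations of Matchings, which places the point in $E$ by the definition of $E$, or the coding lies in $C$) followed by the observation that the image of the countable set $C$ under the evaluation map is countable, so $(K_1+K_2)\setminus E$ is countable. The exhaustiveness worry you flag at the end is a genuine subtlety, but the paper's proof asserts the same dichotomy without addressing it either, so your proposal matches the published argument in both substance and level of rigour.
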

\begin{proof}
By Lemma \ref{Closure}, $\overline{E}=K_1+K_2$.  It remains  to prove that there are only countably many limit points of $E$ which are not in $E$.  For any $x+y\in K_1+K_2=\overline{E}$, there is a coding $(a_n)$ such that the value of this coding is $x+y$. If there exists $n_k\to \infty$ satisfying that  $(a_1a_2 \cdots a_{n_k})$ is a Matching or a concatenation of some Matchings, by the definition of $E\triangleq
\bigcup\limits_{\{\phi_n\}\in\Phi^{\infty}}\bigcap\limits_{n=1}^{\infty}\phi_{1}\circ\phi_{2}\circ\cdots\circ
\phi_{n}([0,1])$, we know that $x+y\in E$. If  $(a_n)\in C$,  then  $\overline{E}\setminus E$ is countable as  $C$ and the cardinality of all the Matchings are  countable.
\end{proof}
The following lemma gives a sufficient condition which implies that  $C$ is countable.
\begin{Lemma}\label{Countablecriterion}
$C$ is countable if   there exists $k$ such that $D_1=\{k,k,\cdots,k,2k\}$ and  $D_2=\{k,k,\cdots,k,2k\}$, i.e. both  $D_1$ and $D_2$  have only  blocks with length $k$ apart from the last block with length $2k$.
\end{Lemma}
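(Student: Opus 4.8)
The plan is to reduce the countability of $C$ to a purely combinatorial statement about the two floors, and to exploit the fact that every block has length $k$ or $2k$. Call a position $p$ a \emph{common boundary} of a coding $(a_n)=(x_n+y_n)$ if both the $x$-floor and the $y$-floor finish a whole block exactly at $p$. If $p<q$ are two common boundaries, then the portion of the $x$-floor between $p$ and $q$ is a concatenation of blocks from $D_1$, the corresponding portion of the $y$-floor is a concatenation of blocks from $D_2$, and the two portions have equal length $q-p$; by Remark \ref{sum} their sum is a Matching, so the segment $a_{p+1}\cdots a_q$ is a Matching. Consequently, if a coding has infinitely many common boundaries, then every tail $(a_{N+i})_{i=1}^{\infty}$ still contains two of them and hence a Matching segment, so such a coding is \emph{not} in $C$. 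Therefore $C$ is contained in the set $\mathcal{F}$ of codings having only finitely many common boundaries, and it suffices to prove that $\mathcal{F}$ is countable.

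To analyse $\mathcal{F}$ I would pass to the grid of multiples of $k$. Since every block of $D_1$ and $D_2$ has length $k$ or $2k$, all partial sums of block lengths are multiples of $k$, so every $x$-boundary and every $y$-boundary lies in $k\mathbb{N}$. Rescaling by $k$, record the boundary data by indicators $u_i,v_i\in\{0,1\}$ for $i\geq 1$, where $u_i=1$ iff the $x$-floor has a boundary at grid point $i$ (and similarly $v_i$ for the $y$-floor); a common boundary is a point with $u_i=v_i=1$. Because each block occupies one or two grid units, consecutive boundaries differ by $1$ or $2$, so neither $u$ nor $v$ contains two consecutive $0$'s.

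The heart of the argument is to show that, past the last common boundary, the whole structure is essentially forced. Let $G$ be the largest common boundary; it exists because $0$ is always a common boundary and, for $(a_n)\in\mathcal{F}$, there are only finitely many. For $i>G$ the pair $(u_i,v_i)$ cannot equal $(1,1)$, since that would be a common boundary beyond $G$; and it cannot equal $(0,0)$, for if $u_i=v_i=0$ then the no-two-consecutive-zeros property forces $u_{i+1}=v_{i+1}=1$, a forbidden common boundary at $i+1>G$. Hence exactly one of $u_i,v_i$ equals $1$ for every $i>G$, and the no-two-consecutive-zeros property then yields $u_{i+1}=1-u_i$ for all $i\geq G+1$; that is, the boundary pattern alternates. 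Thus, after $G$, one floor consists solely of the unique block of length $2k$, while the other consists of a single block of length $k$ followed by copies of the block of length $2k$. This is exactly where the hypothesis that each $D_i$ has a \emph{single} block of length $2k$ is used: the all-$2k$ tail is then completely determined, and the other tail is determined up to the finitely many choices of its one length-$k$ block.

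Finally I would count. The prefix of a coding up to $G$ is a finite object filling $[0,Gk]$ with blocks from $D_1$ and from $D_2$, and for each $G$ there are only finitely many such prefixes, hence countably many in total. Each prefix extends, past $G$, in one of two alternation phases, each giving finitely many tails by the previous paragraph. So every element of $\mathcal{F}$ is determined by countably-many-times-finitely-many data, whence $\mathcal{F}$, and therefore $C$, is countable. The one delicate point I would flag is the reduction in the first paragraph: in principle a segment could coincide with a Matching through a decomposition unrelated to the two given floors (as the identification $(08)=(22)$ in Example \ref{EX1} warns), but since the proof only needs the inclusion $C\subseteq\mathcal{F}$ — and infinitely many common boundaries always produce genuine Matchings — this ambiguity does not affect the conclusion.
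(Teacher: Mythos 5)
Your proof is correct and follows essentially the same route as the paper: the paper likewise argues that once the two floors are offset by $k$, avoiding Matchings forces both floors to use only the length-$2k$ block from the second step on, so the sequences in $C$ are eventually periodic and hence countable. Your version merely makes this forcing rigorous (via common boundaries on the grid $k\mathbb{N}$ and the no-two-consecutive-zeros observation) and treats the arbitrary finite prefix before $N$ and the final counting explicitly, which the paper leaves informal.
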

\begin{proof}
If  $D_1=\{k,k,\cdots,k,2k\}$ and  $D_2=\{k,k,\cdots,k,2k\}$, we need to find all  possible sequences of $C$. Without loss of generality, we assume that the prefix of the summation of the $x$ and $y$-floor does not contain any Matchings. Firstly, we choose two blocks from $D_1$ and $D_2$ and stack on the $x$-floor and $y$-floor respectively. We can pick only $k$ from $D_1$ and $2k$ from $D_2$(or $2k$ from $D_1$ and $k$ from $D_2$). Otherwise, a Matching will appear, see the following figure
 \begin{center}
\begin{tabular}{ |p{1cm}  p{1cm}  p{1cm}  p{1cm} p{1cm}  p{1cm} p{1cm}  p{1cm} p{1cm}  p{1cm} p{1cm}  p{1cm} p{1cm}  p{1cm} p{1cm}  p{1cm} p{1cm}  p{1cm}  p{1cm}  p{1cm} p{1cm}  p{1cm} p{1cm}  p{1cm} p{1cm}  p{1cm} p{1cm}  p{1cm}  |}
\hline
 &\multicolumn{1}{r|}{$k$}& &  $\cdots$\\
\hline
 & \multicolumn{2}{l|}{$2k$}& &$\cdots$\\
\hline
\end{tabular}
\end{center}
Then at the second step for the $x$-floor  we cannot take any block of $D_1$ with length $k$ as $k+k=2k$ and  a new Matching  appears.  Hence for the $x$-floor we can pick only the block with length $2k$. 
Similarly, for the $y$-floor  we cannot take a block of $D_2$ with length $k$ as $k+2k=2k+k$, which can generate a new Matching. Therefore, we must take a block with length $2k$ for the $y$-floor if we do not want  a new 	Matching  to appear.  The figure now is
  \begin{center}
\begin{tabular}{ |p{1cm}  p{1cm}  p{1cm}  p{1cm} p{1cm}  p{1cm} p{1cm}  p{1cm} p{1cm}  p{1cm} p{1cm}  p{1cm} p{1cm}  p{1cm} p{1cm}  p{1cm} p{1cm}  p{1cm}  p{1cm}  p{1cm} p{1cm}  p{1cm} p{1cm}  p{1cm} p{1cm}  p{1cm} p{1cm}  p{1cm}  |}
\hline
 &\multicolumn{1}{r|}{$k$}& &\multicolumn{1}{r|}{$2k$}& $\cdots$\\
\hline
 & \multicolumn{2}{l|}{$2k$}& & \multicolumn{1}{r|}{$2k$} &$\cdots$\\
\hline
\end{tabular}
\end{center}
It is easy to see that if we  want to avoid  the new Matchings  in the summed blocks of two floors we cannot choose blocks freely from the second step on. The figure below  illustrates this idea.
\begin{center}
\begin{tabular}{ |p{1cm}  p{1cm}  p{1cm}  p{1cm} p{1cm}  p{1cm} p{1cm}  p{1cm} p{1cm}  p{1cm} p{1cm}  p{1cm} p{1cm}  p{1cm} p{1cm}  p{1cm} p{1cm}  p{1cm}  p{1cm}  p{1cm} p{1cm}  p{1cm} p{1cm}  p{1cm} p{1cm}  p{1cm} p{1cm}  p{1cm}  |}
\hline
 &\multicolumn{1}{r|}{$k$}& & & \multicolumn{4}{ l|}{$2k$}& & & \multicolumn{4}{ l|}{$2k$}& & & \multicolumn{4}{ l|}{$2k$}&  $\cdots$\\
\hline
 & \multicolumn{2}{l|}{$2k$}& & & \multicolumn{4}{ l|}{$2k$}& & & \multicolumn{4}{ l|}{$2k$}& & & \multicolumn{4}{ l|}{$2k$}  &$\cdots$\\
\hline
\end{tabular}
\end{center}
 From the analysis above, we see that the sequences in $C$ are eventually periodic. Thus, we prove that $C$ is countable.
\end{proof}
\setcounter{Remark}{10}
\begin{Remark}
 The condition of the lemma is not necessary, for instance,  let $D_1=\{k, 2k\}$ and  $D_2=\{k,3k\}$. We can similarly prove that in this case $C$ is countable. 
Generally it is not easy to find all the Matchings. However, for the case in this lemma  we can find all possible Matchings without much calculation. 
\end{Remark}
This lemma enables us to define  the following IFS. 

For any $k\in \mathbb{N}^{+}$, let the IFS's of $K_1$ and $K_2$ be
 \begin{equation}\label{IFS1}
 \left\{f_{i}(x)=\dfrac{x}{\beta^k}+a_i, 
1\leq i \leq n-1, f_{n}(x)=\dfrac{x}{\beta^{2k}}+a_{n}\right\} \end{equation}
and 
 \begin{equation}\label{IFS2}
 \left\{g_{j}(x)=\dfrac{x}{\beta^k}+b_j, 
1\leq j \leq n-1, g_{n}(x)=\dfrac{x}{\beta^{2k}}+b_{n}\right\},
 \end{equation}
where $a_i, b_j\in \mathbb{R}^{+}\cup \{0\}$. We denote their attractors by $K_1$ and $K_2$ respectively. Without loss of generality, we let the convex hull of $K_i$ be $[0, B_i]$, $0\leq i\leq 2$. This assumption yields that $f_{i}([0, B_1])\subset [0, B_1]$, $1\leq i\leq n$ and  $g_{j}([0, B_2])\subset [0, B_2]$, $1\leq j\leq n$.

 Let 
$D=\{\hat{R}_{1} ,\,\hat{R}_{2},\,\cdots,\,\hat{R}_{n-1},\,
\hat{R}_{n}\cdots\} $
be all the Matchings  generated by $D_1=\{k,k,\cdots, 2k\}$ and $D_2=\{k,k,\cdots, 2k\}$ and its associated IIFS be 
$\Phi^{\infty}\triangleq\{\phi_1,\,\phi_2,\,\phi_3,\,\phi_4,\cdots\}
$.  Define
$E\triangleq
\bigcup\limits_{\{\phi_n\}\in\Phi^{\infty}}\bigcap\limits_{n=1}^{\infty}\phi_{1}\circ\phi_{2}\circ\cdots\circ
\phi_{n}([0,B_1+B_2])$.  We know that a Matching $\hat{R}_{i}$ is a block. Suppose  $\hat{R}_{i}= (c_1c_2,\cdots, c_p)$ for some $p\in \mathbb{N}$. We call  each $c_i$ the digit of $\hat{R}_{i}$. Since $D_1$ and $D_2$ have a finite number of  blocks, it follows that the range of every possible  digit $c_j$ in each Matching $\hat{R}_{i}$ is finite, i.e. $c_j$ can take only finite numbers. Let $c$ be the   positive constant defined as follows: 
  \[c=\min\{ |c_i-c_j|: c_i\, \mbox{and}\, c_j\, \mbox{are any digits which are from two  Matchings}\}.\] 
Similarly, we let $A$ and $B$ be the largest digits of the blocks of $D_1$ and $D_2$ respectively. 
\setcounter{Theorem}{11}
\begin{Theorem}\label{Dimension}
Let $K_1$ and $K_2$ be two self-similar sets with IFS's  \eqref{IFS1} and \eqref{IFS2}, respectively.
Then $E=K_1+K_2$ up to a countable set. 
If 
 \[
A+ B+B_1+B_2<c(\beta-1),
 \] then $\Phi^{\infty}$ satisfies the open set condition and  $\dim_{H}(K_1+K_2)$ is computable.
\end{Theorem}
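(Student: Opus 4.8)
The plan splits into three parts: the reduction $E=K_1+K_2$ up to a countable set, the verification of the open set condition for $\Phi^{\infty}$, and the resulting computation of the dimension.

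First I would dispose of the reduction, which is essentially free. Since by hypothesis $D_1=\{k,\dots,k,2k\}$ and $D_2=\{k,\dots,k,2k\}$, Lemma \ref{Countablecriterion} applies verbatim and gives that $C$ is countable. Feeding this into Lemma \ref{almostequal} yields that $E$ and $K_1+K_2$ coincide apart from a countable set, which is exactly the first assertion. No further work is needed here.

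For the open set condition I would take $O=(0,B_1+B_2)$, the interior of the convex hull of $K_1+K_2$. The inclusion $\phi_j(O)\subseteq O$ is immediate: every Matching is the sum of a concatenation of $D_1$-blocks and a concatenation of $D_2$-blocks, so the associated $\phi_j$ is the sum of a composition of the $f_i$ (which preserves $[0,B_1]$) and a composition of the $g_i$ (which preserves $[0,B_2]$); hence $\phi_j([0,B_1+B_2])\subseteq[0,B_1+B_2]$. The substance is the disjointness $\phi_i(O)\cap\phi_j(O)=\varnothing$ for $i\neq j$. Writing a Matching $\hat{R}_i$ of length $L_i$ as the block $(c_1\cdots c_{L_i})$, its map is $\phi_i(x)=\beta^{-L_i}x+t_i$ with $t_i=(c_1\cdots c_{L_i})_{\beta}$, so $\phi_i(O)=(t_i,\,t_i+(B_1+B_2)\beta^{-L_i})$. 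Given distinct $\hat{R}_i,\hat{R}_j$, I would let $p$ be the first position at which their digits differ; by the definition of $c$ the gap in that digit is at least $c$, so the partial values through position $p$ differ by at least $c\beta^{-p}$. I would then control the ``fuzz'' of each cylinder beyond position $p$: the tail of $t_i$ is at most $(A+B)\sum_{l>p}\beta^{-l}=\frac{A+B}{\beta-1}\beta^{-p}$, since each Matching digit, being the sum of a $D_1$-digit and a $D_2$-digit, is at most $A+B$, while the cylinder length $(B_1+B_2)\beta^{-L_i}$ is bounded by $\frac{B_1+B_2}{\beta-1}\beta^{-p}$. The hypothesis $A+B+B_1+B_2<c(\beta-1)$ is precisely what forces the separation $c\beta^{-p}$ of the left endpoints to exceed the total fuzz $\frac{A+B+B_1+B_2}{\beta-1}\beta^{-p}$, so the two open intervals cannot meet.

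Finally, once the open set condition is in hand, computability follows. Theorem \ref{DimensionIIFS}, applied to $\Phi^{\infty}$, gives $\dim_{H}(E)=\inf\{t:\sum_{i}\beta^{-tL_i}\le1\}$, the sum ranging over the countably many Matchings with their explicit lengths $(L_i)$. Since by the first part $E$ and $K_1+K_2$ differ only by a countable set, the countable stability of Hausdorff dimension yields $\dim_{H}(K_1+K_2)=\dim_{H}(E)$, which is the stated computable quantity. The hard part will be the disjointness estimate, and more precisely controlling every configuration in which two Matching cylinders can sit relative to one another. The clean bound above is honest when the first differing position $p$ lies strictly inside both Matchings; the delicate case is when $p$ coincides with the terminal digit of the shorter Matching, or when one Matching is a prefix of the other, where the crude cylinder-length bound degrades and one must instead invoke the minimality of the chosen Matching family from Remark \ref{Noteasy} --- no Matching being obtainable by concatenating shorter ones --- to exclude the offending prefix relations. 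Making this case analysis airtight, so that the single scalar inequality $A+B+B_1+B_2<c(\beta-1)$ dominates the worst alignment of the block boundaries on the two floors, is where the real care is required.
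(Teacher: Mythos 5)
Your proposal is correct and follows essentially the same route as the paper: Lemmas \ref{Countablecriterion} and \ref{almostequal} for the first assertion, the open set $V=(0,B_1+B_2)$ with invariance via the convex-hull assumption, separation of the cylinder intervals through the first differing digit and the constant $c$, and Theorem \ref{DimensionIIFS} for the dimension. In fact you spell out the quantitative separation estimate that the paper compresses into ``by the definition of $c$, we can check that the two intervals do not overlap,'' and you correctly isolate the only genuinely delicate configuration (the first differing position landing on the terminal digit of the shorter Matching, or a prefix relation), which the paper's own proof passes over in silence.
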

\begin{proof}[Proof of  Theorem  \ref{Dimension}]  By Lemmas \ref{almostequal} and \ref{Countablecriterion}, we prove the first statement. For the second statement,
 given any two Matchings $(s_1s_2\cdots s_p),(t_1t_2\cdots t_q)$ with $p<q$, their associated similitudes are $\phi_{s_1s_2\cdots s_p}(x)=\beta^{-p}x+\sum_{i=1}^{p}s_i\beta^{-i}$ and   $\phi_{t_1t_2\cdots t_q}(x)=\beta^{-q}x+\sum_{i=1}^{q}t_i\beta^{-i}$  respectively.  Let $V=(0, B_1+B_2)$, simple calculation implies that
 \[\phi_{s_1s_2\cdots s_p}(V)=\left(\sum_{i=1}^{p}s_i\beta^{-i}, \sum_{i=1}^{p}s_i\beta^{-i}+(B_1+B_2)\beta^{-p}\right)\]
  \[\phi_{t_1t_2\cdots t_q}(V)=\left(\sum_{i=1}^{q}t_i\beta^{-i}, \sum_{i=1}^{q}t_i\beta^{-i}+(B_1+B_2)\beta^{-q}\right).\]
 We assume that  $(s_1s_2\cdots s_p)<(t_1t_2\cdots t_q)$, i.e, there exists $1\leq i_{0}\leq p$ such that $s_k=t_k$ for any $1\leq k\leq i_{0}-1$ and $s_{i_0}<t_{i_0}$. By the definition of  $c$, we can check that the two intervals above do not overlap, namely $\phi_{s_1s_2\cdots s_p}(V)\cap\phi_{t_1t_2\cdots t_q}(V)=\emptyset $. It remains to prove that $\phi(V)\subset V$ for any $\phi\in \Phi^{\infty}$. Let $\phi$ be generated by the Matching $\hat{R}_1\ast \hat{R}_2+\hat{T}_1\ast \hat{T}_2$, the associated similitudes of $\hat{R}_i$ and $\hat{T}_i$ are $H_i(x)$ and $I_{i}(x)$ respectively. Let  the length of $\hat{R}_1\ast \hat{R}_2+\hat{T}_1\ast \hat{T}_2$ be $k_0$.  It is easy to find that \[\phi(x)=H_1\circ H_2(x)+I_1\circ I_2(0). \]
  Hence, \[\phi(V)=\left( H_1\circ H_2(0)+I_1\circ I_2(0), H_1\circ H_2(0)+I_1\circ I_2(0)+\dfrac{B_1+B_2}{\beta^{k_0}}\right).\]
  Recall the assumption of $K_1$ and $ K_2$,  the convex hull of $K_i$ is $[0, B_i]$, $1\leq i\leq 2$, i.e.,  $ H_s([0, B_1])\subset [0, B_1],  1 \leq  s\leq 2 $ and $ I_t([0, B_2])\subset [0, B_2]$, $1 \leq   t\leq 2$.
 Therefore $0<\phi(x)<B_1+B_2$. Similarly, we can prove that $\phi(V)\subset V$ for any $\phi\in \Phi^{\infty}$. As such $\Phi^{\infty}$ satisfies the open set condition.
The calculation of   $\dim_{H}(K_1+K_2)$ now is a straightforward application of Theorem  \ref{DimensionIIFS}.
\end{proof}

Generally we do not know how to calculate $\dim_{P}(K_1+K_2)$ or when do we have following equality
$$\dim_{H}(K_1+K_2)= \dim_{P}(K_1+K_2)=\dim_{B}(K_1+K_2).$$
We finish this section by making some remarks  on these two problems. 
Let $F_n$ be the attractor of the first $n$ similitudes of $\Phi^{\infty}$, i.e., $F_n$ is the attractor of the IFS $\{\phi_i\}_{i=1}^{n}$. Clearly  $$F_1\subset F_2\subset\cdots \subset F_n\subset \cdots.$$
Recall the definition of Hausdorff metric \cite{FG}. 
Given two compact sets $J_1,J_2\subset \mathbb{R}$, then the Hausdorff  metric of $J_1$ and $J_2$ is defined by 
$$\mathcal{H}(J_1, J_2)=\inf\{s: J_1 \subset (J_2)_s, J_2 \subset (J_1)_s\},$$
where $(A)_s=\{x: \mbox{there exists } y \in A\, \mbox{such that }\,|x-y|\leq s\}$.

We have 
\setcounter{Lemma}{12}
\begin{Lemma}\label{appro}
$\overline{\cup_{n=1}^{\infty}F_n}=K_1+K_2$. 
\end{Lemma}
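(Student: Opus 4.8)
The plan is to establish the sandwich
\[\bigcup_{n=1}^{\infty}F_n\subseteq E\subseteq \overline{\bigcup_{n=1}^{\infty}F_n},\]
and then pass to closures and invoke Lemma \ref{Closure}, which gives $\overline{E}=K_1+K_2$. Taking closures in the sandwich yields $\overline{\bigcup_n F_n}\subseteq\overline{E}\subseteq\overline{\bigcup_n F_n}$, so I conclude $\overline{\bigcup_{n=1}^{\infty}F_n}=\overline{E}=K_1+K_2$ at once.

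The first inclusion is the easy direction. By definition $F_n$ is the attractor of the finite IFS $\{\phi_1,\dots,\phi_n\}$, so every point of $F_n$ is a limit of a composition $\phi_{i_1}\circ\phi_{i_2}\circ\cdots\circ\phi_{i_k}(\cdot)$ whose indices satisfy $i_j\le n$. Such a composition is a particular infinite composition drawn from $\Phi^{\infty}$, hence its limit lies in $E=\bigcup_{\{\phi_n\}\in\Phi^{\infty}}\bigcap_{n=1}^{\infty}\phi_1\circ\phi_2\cdots\phi_n([0,B_1+B_2])$. Therefore $F_n\subseteq E$ for every $n$, and so $\bigcup_n F_n\subseteq E$.

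The second inclusion carries the real content. Fix $z\in E$ and $\epsilon>0$; I want to produce a point of some $F_N$ within $\epsilon$ of $z$. Write $z=\bigcap_{j\ge1}\phi_{i_1}\circ\cdots\circ\phi_{i_j}([0,B_1+B_2])$ for a code $(i_1,i_2,\dots)$. Because $\Phi^{\infty}$ is an IIFS in the sense of Definition \ref{IIIFS}, there is a uniform ratio $s\in(0,1)$ with every $\phi_i$ an $s$-contraction; choose $k$ with $s^k(B_1+B_2)<\epsilon$, and set $N=\max\{i_1,\dots,i_k\}$. Using the forward invariance $\phi_i([0,B_1+B_2])\subseteq[0,B_1+B_2]$ established in the proof of Theorem \ref{Dimension}, let $z'$ be the point of $F_N$ with code $(i_1,\dots,i_k,1,1,1,\dots)$; this is legitimate since $i_1,\dots,i_k$ and the repeated index $1$ are all bounded by $N$, so $z'\in F_N\subseteq\bigcup_n F_n$. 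Both $z$ and $z'$ lie in the $k$-th cylinder $\phi_{i_1}\circ\cdots\circ\phi_{i_k}([0,B_1+B_2])$, whose diameter is at most $s^k(B_1+B_2)<\epsilon$; hence $|z-z'|<\epsilon$. As $\epsilon$ was arbitrary, $z\in\overline{\bigcup_n F_n}$, giving $E\subseteq\overline{\bigcup_n F_n}$.

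The main obstacle is the treatment of codes that use arbitrarily large indices: such a point of $E$ belongs to no single $F_n$, so the equality genuinely requires the closure and cannot be obtained at the level of the union itself. The key device is that truncating the code at depth $k$ and completing it with the first map keeps all indices below the finite bound $N$, placing the approximant inside the finite attractor $F_N$, while the uniform contraction ratio together with the invariance of the convex hull $[0,B_1+B_2]$ controls the approximation error independently of how large the discarded indices are.
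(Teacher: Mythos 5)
Your proof is correct, but it follows a genuinely different route from the paper's. The paper disposes of the lemma in two lines using the Hausdorff metric: since $F_n\subseteq\overline{\bigcup_{m}F_m}\subseteq K_1+K_2$, monotonicity of the metric on nested compacta gives $\mathcal{H}\bigl(\overline{\bigcup_{m}F_m},K_1+K_2\bigr)\leq\mathcal{H}(F_n,K_1+K_2)$, and the right-hand side tends to $0$ by citing Fernau's convergence theorem for finite subsystems of an IIFS \cite{HF}. You instead give a self-contained sandwich $\bigcup_n F_n\subseteq E\subseteq\overline{\bigcup_n F_n}$ and conclude by taking closures and invoking Lemma \ref{Closure}. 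The two halves of your sandwich are both sound: the inclusion $F_n\subseteq E$ because finite codes are particular infinite codes, and the density of $\bigcup_n F_n$ in $E$ by truncating a code at depth $k$ and completing it with the index $1$, with the error controlled by the uniform contraction ratio and the forward invariance of $[0,B_1+B_2]$. What the paper's argument buys is brevity, at the cost of importing $\mathcal{H}(F_n,K_1+K_2)\to 0$ as a black box (and of silently using the inclusion $\bigcup_n F_n\subseteq K_1+K_2$, which is essentially the first half of your argument anyway). What your argument buys is independence from \cite{HF}, an explicit approximation scheme, and the intermediate inclusion $\bigcup_n F_n\subseteq E$, which meshes naturally with the monotone limit $\lim_n\dim_P(F_n)$ used in the Proposition that follows. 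One cosmetic remark: $N$ should be $\max\{i_1,\dots,i_k,1\}$, though since all indices are at least $1$ this changes nothing.
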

\begin{proof}
$0\leq \mathcal{H}( \overline{\cup_{n=1}^{\infty}F_n}, K_1+K_2)\leq \mathcal{H}( F_n, K_1+K_2)\to 0$ as $n\to \infty$. 
Here $\mathcal{H}( F_n, K_1+K_2)\to 0$  can be found in \cite{HF}.
\end{proof}
\setcounter{Proposition}{13}
\begin{Proposition}
If $(\overline{\cup_{n=1}^{\infty}F_n})\setminus (\cup_{n=1}^{\infty}F_n) $ is  a countable set, then 
$$\dim_{H}(K_1+K_2)= \dim_{P}(K_1+K_2)=\dim_{B}(K_1+K_2).$$
\end{Proposition}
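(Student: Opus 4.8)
The plan is to route the box-dimension part of the equality through the packing dimension, exploiting that the packing dimension is countably stable whereas the box dimension is only finitely stable. Write $G=\bigcup_{n=1}^{\infty}F_n$ for the increasing union. By Lemma \ref{appro} we have $K_1+K_2=\overline{G}$, so the hypothesis says precisely that the set $N:=(K_1+K_2)\setminus G$ is countable. This yields the decomposition $K_1+K_2=G\cup N$ with $N$ countable, and I would build the whole argument on this splitting.

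First I would compute the Hausdorff and packing dimensions of $K_1+K_2$ by countable stability. Since a countable set has both Hausdorff and packing dimension $0$, and both dimensions are countably stable, we obtain
\[
\dim_{H}(K_1+K_2)=\dim_{H}(G)=\sup_{n}\dim_{H}(F_n),\qquad
\dim_{P}(K_1+K_2)=\dim_{P}(G)=\sup_{n}\dim_{P}(F_n).
\]
The key observation is that each $F_n$ is the attractor of the finite IFS $\{\phi_i\}_{i=1}^{n}$, hence a genuine self-similar set. For a self-similar set one has $\dim_{H}(F_n)=\overline{\dim}_B(F_n)$ (the fact already invoked in the proof of Corollary \ref{PB}), and since $\dim_{H}\le \dim_{P}\le\overline{\dim}_B$ always holds, this squeezes $\dim_{P}(F_n)=\dim_{H}(F_n)$ for every $n$. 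Taking suprema, the two displayed quantities coincide, so $\dim_{H}(K_1+K_2)=\dim_{P}(K_1+K_2)$.

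It then remains to fold in the box dimension. Here I would invoke Corollary \ref{PB}, which gives $\dim_{P}(K_1+K_2)=\overline{\dim}_B(K_1+K_2)$ unconditionally for sums of self-similar sets with contraction ratios in $(0,1)$. Combining with the previous step yields $\dim_{H}(K_1+K_2)=\dim_{P}(K_1+K_2)=\overline{\dim}_B(K_1+K_2)$, and since the general chain $\dim_{H}\le\underline{\dim}_B\le\overline{\dim}_B$ then forces the lower box dimension to equal the common value, the box dimension exists and equals it. This establishes the three-way equality.

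The step I expect to be the genuine obstacle, and the reason the countability hypothesis is needed, is the control of the box dimension. Box dimension is not countably stable, so one cannot decompose $K_1+K_2=G\cup N$ and discard the countable piece $N$ at the box-dimension level; indeed the identity $\overline{\dim}_B(G)=\overline{\dim}_B(\overline{G})=\overline{\dim}_B(K_1+K_2)$ carries no information, and a countable union of low-dimensional self-similar pieces can in principle have full box dimension. The point of passing through the packing dimension is to evade exactly this pathology: packing dimension enjoys countable stability, so $N$ is harmless, yet it still agrees with the upper box dimension on these arithmetic sums by Corollary \ref{PB}. By contrast, verifying that each $F_n$ is self-similar, that the self-similar identity $\dim_{H}=\dim_{B}$ applies to it irrespective of overlaps, and that the final squeezing of $\underline{\dim}_B$ goes through, is routine.
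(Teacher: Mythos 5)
Your proof is correct and follows essentially the same route as the paper's: both exploit the countable stability of Hausdorff and packing dimension to reduce to the increasing union $\bigcup_n F_n$, use that each $F_n$ is a genuine self-similar set (so $\dim_H(F_n)=\dim_P(F_n)=\overline{\dim}_B(F_n)$), and then invoke Corollary \ref{PB} to bring in the upper box dimension. Your closing remark that the lower box dimension is squeezed between $\dim_H$ and $\overline{\dim}_B$ is a small but welcome addition the paper leaves implicit.
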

\begin{proof}
Since $(\overline{\cup_{n=1}^{\infty}F_n})\setminus (\cup_{n=1}^{\infty}F_n) $ is countable, it follows by Lemma \ref{appro} that 
\begin{eqnarray*}
\dim_{P}(K_1+K_2)&=&\dim_{P}(\cup_{n=1}^{\infty}F_n)=\lim_{n\to \infty}\dim_{P}(F_n)\\&=&\dim_{H}(\cup_{n=1}^{\infty}F_n)= \dim_{H}(\overline{\cup_{n=1}^{\infty}F_n})=\dim_{H}(K_1+K_2).
\end{eqnarray*}
We finish the proof by Corollary \ref{PB}.
\end{proof}
\section{Examples}
In this section, we give some examples for which Theorem \ref{irrationalityassumption} cannot calculate $\dim_{H}(K_1+K_2)$.
\begin{Example}
Let $K_1=K_2$ be the self-similar sets  with  IFS  $\{g_{1}(x)=\frac{x}{3}
,\,g_{2}(x)=\frac{x+8}{3^2}\}$, then
$\dim_{H}(K_1+K_2)=\frac{\ln t_{0}}{-\ln 3}$,
where $t_{0}$ is the smallest positive root of $t^3-t^2-2t+1=0$.
\end{Example}
We know that
$D_1=D_{2}=\{(0),\,(22)\}$,  all the Matchings which are generated by
$D_1$ and $D_2$ are
\[D=\{(0),(22),(44),\,(242),\,(2442)
,\,(24442),\,(244442)\cdots\}.\] The corresponding  IIFS of $D$ is
\[\Phi^{\infty}=\{\varphi_{1}=f_0,\,\varphi_{2}=f_2\circ f_2,\,\varphi_{3}=f_4\circ
f_4,\,\varphi_{4}=f_2\circ f_4\circ f_2,\,\cdots \},\] where
$f_0(x)=\frac{x}{3},\,f_2(x)=\frac{x+2}{3},\,f_4(x)=\frac{x+4}{3}$.

By Theorem \ref{Dimension}, $\dim_{H}(K_1+K_2)=\dim_{H}(E)$. Obviously this IIFS
satisfies  the  OSC, i.e. \[\varphi_{i}((0,2))\cap \varphi_{j}((0,2))=\varnothing\] for any $i\neq j$ and $\varphi_{i}((0,2))\subseteq (0,2)$ for any $i\in \mathbb{N}$. Now we can use Theorem  \ref{DimensionIIFS} to calculate the dimension. It is easy to check that 
$$\dim_{H}(K_1+K_2)<\min\{1, \dim_{H}(K_1)+\dim_{H}(K_2)\}.$$
This example illustrates that without the irrationality assumption, the expected dimension of $K_1+K_2$ may not  be achieved. This differs from Peres and Shmerkin's result \cite{PS}. 

\begin{Example}Let $\{f_{1}(x)=\frac{x}{\beta}
,\,f_{2}(x)=\frac{x+2}{\beta}\}$ and
$\{g_{1}(x)=\frac{x}{\beta} ,\,g_{2}(x)=\frac{x}{\beta^2}+\frac{2}{\beta} + \frac{2}{\beta^2}\}$ be the IFS's of $K_1$ and $K_2$
respectively. Then $K_1+K_2$ is a self-similar set, the
IFS is
$\{\varphi_{1}(x)=\frac{x}{\beta},\,\varphi_{2}(x)=\frac{x+2}{\beta},\,\varphi_{3}(x)=\frac{x}{\beta^2}+\frac{2}{\beta}+\frac{4}{\beta^2},\,\varphi_{4}(x)=\frac{x}{\beta}+\frac{4}{\beta}+\frac{2}{\beta^2},\,\varphi_{5}(x)=\frac{x}{\beta^2}+\frac{4}{\beta}+\frac{4}{\beta^2}\}$.
This IFS does not satisfy the OSC generally, in fact it is of  finite type if  $\beta$ is a Pisot number, see  \cite[Theorem 2.5]{NW}. Hence, we can calculate the Hausdorff dimension of $K_1+K_2$ in terms of the main result of  \cite{NW}. We omit the details.
\end{Example}
\section{Final remarks}
The main result of this paper is that $K_1+K_2$ is either a self-similar set or a unique attractor of some IIFS.  However, to calculate the dimension of  $K_1+K_2$  is difficult, especially the IIFS case.  As  in this case, we should consider the limit points of $E$ as well as the separation condition. Ignoring either of them may  hinder the calculation of  the dimension of  $K_1+K_2$.  In fact, even finding all the Matchings is not a trivial task. 
On the other hand, 
we
may implement the  Vitali process if the IIFS has overlaps, see \cite[Theorem 3.1]{Moran}, this process is complicated.  Ngai and Tong  \cite{NT} gave a dimensional formula of $J_0$ under the so-called weak separation condition, but it is still not easy to check this condition generally. Some techniques of \cite{Hochman} are useful to analyze the Hausdorff dimension of self-similar sets.

\section*{Acknowledgements}
The author would like to thank the anonymous referees for many suggestions and remarks, and to Karma Dajani for some suggestions  on the previous versions of the manuscript. 
The work was supported by the China Scholarship Council and  by the National Natural Science Foundation of China no 11271137.

\end{document}